\newtheorem{defi}{Definition}[section]
\newtheorem{theorem}[defi]{Theorem}
\newtheorem{lemma}[defi]{Lemma}
\newtheorem{prop}[defi]{Proposition}
\newtheorem{corollary}[defi]{Corollary}
\newtheorem{example}[defi]{Example}
\newtheorem{remark}[defi]{Remark}
\begin{document}
\keywords{Perfect shuffle, shuffling matrix, rooted tree, Kronecker product, Discrete Fourier Matrix.}

\title{Shuffling matrices, Kronecker product and Discrete Fourier Transform}

\author{Daniele D'Angeli}
\address{Daniele D'Angeli, Institut f\"{u}r Diskrete Mathematik, Technische Universit\"{a}t Graz \ \ Steyrergasse 30, 8010 Graz, Austria}
\email{dangeli@math.tugraz.at}
\author{Alfredo Donno}
\address{Alfredo Donno, Universit\`{a} degli Studi Niccol\`{o} Cusano - Via Don Carlo Gnocchi, 3 00166 Roma, Italia \\
Tel.: +39 06 45678356, Fax: +39 06 45678379}
\email{alfredo.donno@unicusano.it\qquad \textrm{(Corresponding Author)}}

\maketitle

\begin{abstract}
We define and investigate a family of permutations matrices, called shuffling matrices, acting on a set of $N=n_1\cdots n_m$ elements, where $m\geq 2$ and $n_i\geq 2$ for any $i=1,\ldots, m$. These elements are identified with the vertices of the $m$-th level of a rooted tree with branch indices $(n_1,\ldots, n_m)$. Each of such matrices is induced by a permutation of $Sym(m)$ and it turns out that, in the case in which one considers the cyclic permutation $(1\ \ldots\ m)$, the corresponding permutation is the classical perfect shuffle. We give a combinatorial interpretation of these permutations in terms of lexicographic order of the vertices of the tree. This allows us to describe their fixed points. We show that our permutation matrices can be used to let the Kronecker product of matrices commute or, more generally, rearrange in an arbitrary order. Moreover, we show that the group generated by such permutations does depend only on the branch indices of the tree, but it is independent from their order. In the case in which such indices coincide, we prove that the corresponding group is a copy of $Sym(m)$ inside $Sym(n^m)$. Finally, we give an application of shuffling matrices in the context of the Discrete Fourier Transform.
\end{abstract}

\begin{center}
{\footnotesize{\bf Mathematics Subject Classification (2010)}: 05A05, 11A63, 15A69, 20B35, 65T50.}
\end{center}

\section{Introduction}

A perfect shuffle is a very natural way of permuting $2n$ cards of a deck. The deck is divided in two parts and then the cards are reordered by interleaving the two decks in one of the two possible ways: one leaving the original top card on top (classical model), one leaving the original top card second from the top. This operation has a very easy mathematical description: one can number the cards from 0 to $2n-1$ and write the permutation obtained after performing the shuffle. Diaconis, Grahm and Kantor \cite{diaconis} were able to determine the structure of the group generated by the two permutations arising from the two perfect shuffles. What is interesting, it is the fact that Diaconis has been a magician and knew many card tricks based on perfect shuffles. If a talented magician is able to perform a perfect shuffle, he knows how the cards are distributed in the deck after the shuffle. The paper \cite{diaconis} contains a very nice section about the history of the perfect shuffle. We refer to that paper and references therein for a historical account on this interesting aspect. It is worth mentioning here the huge literature about shuffling cards and related probabilistic topics \cite{aldousdiaconis, bayerdiaconis, diaconis2}.

Rose \cite{rose} generalized this shuffle by interleaving in a suitable way two sets of size $r$ and $s$. The idea is very simple: consider $rs$ objects grouped in $r$ piles of size $s$ so that they are disposed in an array with $s$ rows and $r$ columns. Then rearrange them in $s$ piles of size $r$ just by switching the rows with the columns. He used the permutation matrices arising from the perfect shuffle of two sets in order to give a method to compute the Discrete Fourier Transform, with an interesting application to the algorithm called Fast Fourier Transform (FFT). Fast Fourier Transforms are widely used for many applications in engineering, science, and mathematics and were described by Gilbert Strang as ``the most important numerical algorithm of our lifetime" \cite{strang}.
Rose noticed that its shuffle was somehow related to the Kronecker (tensor) product of two matrices. This product is, in general, non-commutative, but the commutation can be achieved up to multiplying with opportune permutation matrices of the shuffle. This correspondence was further generalized and exploited by Davio \cite{davio}, who used simple properties of the algebra generated by the matrices arising from the shuffle in order to describe a wide class of switching circuits. Davio generalized the results of Rose by considering $m$ subsets instead of just two, and used the mixed radix formalism for applications in network design. In \cite{ronse}, Ronse generalized further this construction by introducing generalized shuffling permutations, which contain the cases already studied by Rose and Davio, as particular cases.

Our paper can be framed into the context proposed by Davio and Ronse. More precisely, we define a family of permutation matrices, called \textit{shuffling matrices}, associated with $m$ integers $n_1, \ldots, n_m$ and a permutation $\sigma\in Sym(m)$: these matrices will be denoted by $P^{\sigma}_{n_1,\ldots,n_m}$. We investigate the corresponding shuffling permutations on $N=n_1\cdots n_m$ elements, their action on rooted trees, and we study shuffling matrices in terms of their action on iterated Kronecker products of matrices, the conjugacy problem, and an explicit application to the Discrete Fourier Transform theory. More specifically, we consider $m$ sets $X_1,\ldots, X_m$ of size $n_1,\ldots, n_m$, respectively, and identify their cartesian product with the vertices of the $m$-th level of a rooted tree whose branch indices are exactly $n_1,\ldots, n_m$. The $N$ elements of the product can be identified with words of length $m$, whose $i$-th letter belongs to the alphabet $X_i$. The $N$ elements are naturally ordered lexicographically, so that the first letter (or coordinate) can be seen as the most important, and the $m$-th letter as the less important for the ordering. The motivation for this paper is the following observation: if we permute cyclically the $m$ coordinates, by meaning that we permute cyclically their order of importance and consider the lexicographic order given by this rearrangement, we get a permutation which coincides with the perfect shuffle studied by Davio. This reduces to the cases of Rose (for $m=2$) and of Diaconis, Grahm and Kantor (for $m=2$ and $n_2=2$). This suggests the idea that one can study the properties usually studied for the perfect shuffle matrices, when the full symmetric group $Sym(m)$ (and not only cyclic permutations) is considered. Such \lq\lq generalized shuffling permutations\rq\rq were studied by Ronse in \cite{ronse} by using the mixed radix formalism. In our paper, we adopt a matrix approach and we let these permutations act on rooted trees.  This analysis leads to a number of natural questions. We were able to rephrase the results given by Rose and Davio in this more general setting and discussed some algebraic properties of the group of permutations on $N$ objects generated by shuffling matrices. In the homogeneous case $n_1=\ldots =n_m=n$, this enables us to explicitly construct a subgroup of $Sym(n^m)$ isomorphic to $Sym(m)$.
We plan to study, in a future work, products of shuffling matrices and their action on trees or more general combinatorial structures (in the spirit of \cite{dado2, dado3}), and to investigate the cutoff phenomenon for the associated Markov chains \cite{diaconis3, dado, figa}.

The paper is structured as follows. In Section \ref{sectionpreliminaries}, we introduce some basic notation and definitions about mixed radix representation of integers, rooted trees and branch indices, Kronecker products, permutation matrices, and we recall some properties and interpretations of the classical perfect shuffle studied by Rose and Davio. In Section \ref{mainsection}, we introduce the central definition of the paper, represented by the notion of shuffling matrix, and we study some combinatorial and algebraic properties of such permutation matrices (Section \ref{sectionmaindefinition}); in Section \ref{subsectionconjugacy}, we focus our interest on the conjugacy property. In Section \ref{sectiongruppi}, the group generated by shuffling permutations is investigated; in particular, the Section \ref{more} is devoted to the study of the group consisting of perfect shuffle permutations. Finally, the Section \ref{sectionfourier} describes an explicit application of the shuffling matrices to the Fast Fourier Transform theory. The main results can be summarized as follows:\\
$-$ in Proposition \ref{propositionaction}, the image of any integer $x$ satisfying $0\leq x\leq N-1$ under the action of the permutation  associated with the shuffling matrix $P^{\sigma}_{n_1,\ldots,n_m}$ is explicitly described;\\
$-$ in Theorem \ref{theoremconiugio}, we prove that shuffling matrices are able to rearrange the factors of an iterated Kronecker product of $m$ matrices, according with any new order induced by a permutation $\sigma\in Sym(m)$; in the square case, this action is achieved by conjugation, as shown in Theorem \ref{coro1coniugio};\\
$-$ in Theorem \ref{teo:gruppo non omogeneo}, we prove that the group generated by shuffling permutations does not depend on the order of the branch indices, and in Corollary \ref{corollario:gruppo omogeneo} we deduce that, in the homogeneous case, such group is isomorphic to the symmetric group $Sym(m)$;\\
$-$ in Proposition \ref{gruppodeglishuffling}, we show that the group of perfect shufflings on $N$ elements is isomorphic to the multiplicative subgroup of invertible elements of $\mathbb{Z}_{N-1}$;\\
$-$ in Theorem \ref{generaltheoremfourier}, we explicitly describe the block decomposition of the Discrete Fourier Matrix under the action by multiplication of a shuffling permutation matrix: such computation can be considered as the basis of an extended Fast Fourier Transform algorithm.

\section{Preliminaries}\label{sectionpreliminaries}

Rose studied in \cite{rose} the perfect shuffle on a set of $n=rs$ elements. He proposed to represent the elements $x_0,x_1,\ldots, x_{sr-1}$ of such set in an $s\times r$ array
$$
X = (X_{ij}) = \left(
      \begin{array}{cccc}
        x_0 & x_1 & \cdots & x_{r-1} \\
        x_r & x_{r+1} & \cdots & x_{2r-1} \\
        \vdots &  &  & \vdots \\
        x_{(s-1)r} & \cdots & \cdots & x_{sr-1} \\
      \end{array}
    \right)
$$
according with the (unique) representation of each integer in $\{0,1,\ldots, rs-1\}$ as
$$
ir+j, \qquad 0\leq i \leq s-1, \qquad 0\leq j\leq r-1,
$$
so that $X_{ij}=x_{ir+j}$. The perfect shuffle consists in switching the role of $r$ and $s$, by passing to the representation
$$
i's+j', \qquad 0\leq i' \leq r-1, \qquad 0\leq j'\leq s-1,
$$
corresponding to the array
$$
Y = (Y_{i'j'}) =  \left(
      \begin{array}{cccc}
        x_0 & x_r & \cdots & x_{(s-1)r} \\
        x_1 & x_{r+1} & \cdots & x_{(s-1)r+1} \\
        \vdots &  &  & \vdots \\
        x_{r-1} & x_{2r-1} & \cdots & x_{sr-1} \\
      \end{array}
    \right)=X^T,
$$
with $Y_{i'j'} = y_{i's+j'}$, where $y$ is the vector of length $n$ obtained from the vector $x=(x_0,x_1,\ldots, x_{r-1}, x_r,\ldots, x_{sr-1})$ by taking the entries of $X$ column by column. The transformation from the vector $x$ to the vector $y$ can be performed by an $n\times n$ permutation matrix denoted by $P_r^s$, that is, $y=P^s_rx$. The other way around is clearly obtained by the matrix $P^r_s=(P^s_r)^{-1}$.
Rose gives an explicit description of such matrices and shows that they naturally appear in changing the order of the factors in the Kronecker product of two matrices (see \cite{rose}, Proposition 1): given an $r\times r$ matrix $R$
and an $s\times s$ matrix $S$, it holds:
$$
R\otimes S= P^s_r (S\otimes R) P^r_s.
$$
Davio generalized the construction of Rose to the case in which $n$ is the product of more than two factors, that is, $n = b_{m-1}b_{m-2}\cdots b_0$, by using the mixed radix representation of integers in $\{0,1,\ldots, n-1\}$ with respect to the basis vector $[b_{m-1}, b_{m-2}, \ldots, b_0]$. Keeping the analogy with the Rose strategy, we can think that he disposes the $n$ elements in a $b_{m-1}\times (b_{m-2}\cdots b_0)$ array. He describes how the perfect shuffle is perturbed when one performs a cyclic shift of the factors and he is able to give a matrix representation of this phenomenon (see Theorems 4 and 5 in \cite{davio} and Section \ref{mainsection} below). The paper of Davio was very influential in literature because of its applications: he used his results in order to give a simple description of many switching circuits. In fact, the shuffle can be interpreted as a rearrangement of electrical connections and he gave an interpretation of the formulae arising from the algebraic structure behind the shuffling in terms of compositions of circuits (see Figure \ref{davio_circuit}). The paper \cite{ronse} generalized further his construction, in connection with applications to switching devices.
\begin{figure}[h]\begin{center}
\includegraphics[width=0.3\textwidth]{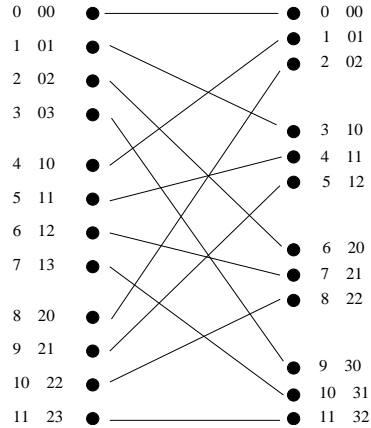}
\caption{The circuit representation of the perfect shuffle $3\times 4$.} \label{davio_circuit}
  \end{center}
\end{figure}

Notice that the $3\times 4$ elements of the set $\{0,1,\ldots, 11\}$ can be identified with the vertices of the second level of the rooted tree $T_{3,4}$ of branch indices $(3,4)$ represented in Figure \ref{figuraaggiuntina}. In the first row, we can read the representation of the integers with respect to the basis vector $[3,4]$; in the second row, they are listed in increasing order from $0$ to $11$; in the third row, we read the action of the shuffle permutation.

\begin{figure}[h]\begin{center}
\includegraphics[width=0.8\textwidth]{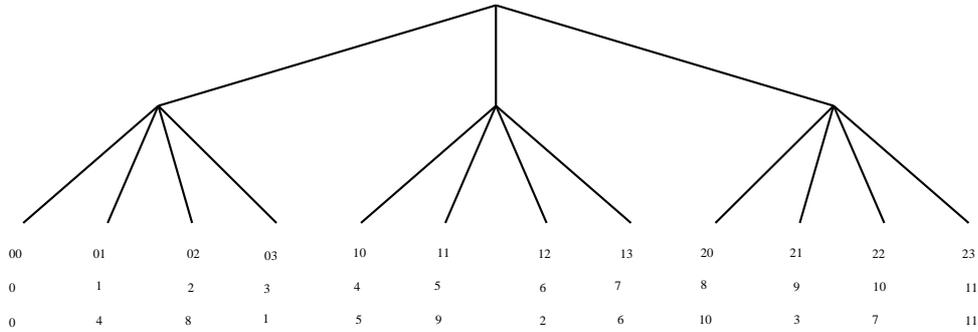}
\caption{The tree representation of the perfect shuffle $3\times 4$.} \label{figuraaggiuntina}
  \end{center}
\end{figure}

In this tree interpretation of the perfect shuffle, we can think that, from the subtree of depth $1$ rooted at each vertex of the first level of $T_{3,4}$, we take the first element (that is, $0,4,8$), then the second element (that is, $1,5,9$), then the third element (that is, $2,6,10$) and finally the fourth element (that is, $3,7,11$), obtaining in this way the final configuration in the third row of Figure \ref{figuraaggiuntina}.

In order to generalize the constructions by Rose and Davio, we start by introducing some notation. Let $m\geq 2$ be a natural number and, for each $i =1,\ldots, m$, let $X_i = \{0,1,\ldots, n_i-1\}$ be a finite alphabet of $n_i$ letters, with $n_i \geq 2$. We put $X = X_1 \times X_2 \times \cdots \times X_m$, so that an element $x=x_1x_2\ldots x_m$ of $X$ is a word of $m$ letters, whose $i$-th letter $x_i$ belongs to the alphabet $X_i$. Put $N = |X| = \prod_{i=1}^mn_i$.

Notice that, for every $j=1,\ldots, m$, the elements of the set $X_1\times X_2\times \cdots \times X_j$ can be naturally identified with the vertices of the $j$-th level of the finite rooted tree $T_{n_1,\ldots,n_m}$ of depth $m$, with branch indices $(n_1,\ldots, n_m)$. In particular, the elements of the set $X$ are identified with the $N$ vertices of the $m$-th level of $T_{n_1,\ldots,n_m}$. Moreover, they can be naturally listed in the lexicographic order as shown in the example of Figure \ref{exampletree}, where we have $m=3$ and $n_1=4$, $n_2=3$, $n_3=2$.

\begin{figure}[h] \begin{center}
%\psfrag{000}{$000$}\psfrag{001}{$001$} \psfrag{010}{$010$} \psfrag{011}{$011$} \psfrag{020}{$020$} \psfrag{021}{$021$} \psfrag{100}{$100$} \psfrag{101}{$101$} \psfrag{110}{$110$} \psfrag{111}{$111$} \psfrag{120}{$120$} \psfrag{121}{$121$} \psfrag{200}{$200$} \psfrag{201}{$201$} \psfrag{210}{$210$} \psfrag{211}{$211$} \psfrag{220}{$220$} \psfrag{221}{$221$} \psfrag{300}{$300$} \psfrag{301}{$301$} \psfrag{310}{$310$} \psfrag{311}{$311$} \psfrag{320}{$320$}   \psfrag{321}{$321$}
\includegraphics[width=1\textwidth]{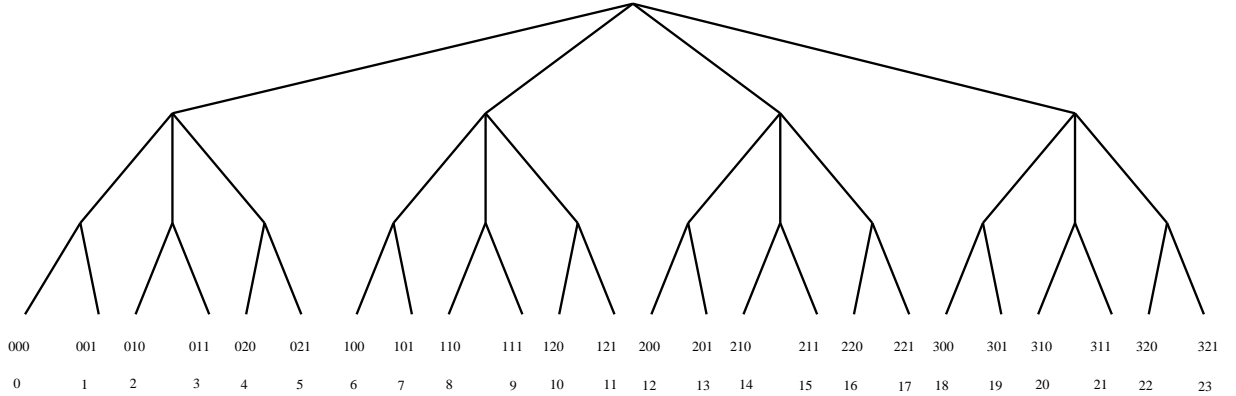} \caption{The rooted tree $T_{4,3,2}$.}  \label{exampletree}
 \end{center}
\end{figure}
It is worth mentioning that this identification and the associated lexicographic order agree with the classical notion of mixed radix representation of a nonnegative integer, which is recalled below.\\
Let $n_1, n_2, \ldots, n_m$, with $n_i \geq 2$ for every $i=1,\ldots, m$, and let ${\bf n} = [n_1, n_2, \ldots, n_m]$ be the associated \textit{basis vector}. Now put
$$
u_0 = 1;\qquad u_i = \prod_{j=0}^{i-1}n_{m-j}, \ \forall i=1,\ldots,m-1.
$$
It is known (see, for instance, \cite{knuth}) that, given any integer $x$ such that $0\leq x \leq N-1$, there exists a unique representation of $x$ with respect to the \textit{basis vector} ${\bf n} = [n_1, n_2, \ldots, n_m]$, or with respect to the \textit{weight vector} ${\bf u} = [u_{m-1}, u_{m-2}, \ldots, u_0]$, called \textit{mixed radix representation} of $x$ with respect to ${\bf n}$, or with respect to ${\bf u}$, respectively, given by
\begin{eqnarray*}
x &=& \sum_{j=1}^m x_j u_{m-j}\\
&=& x_1 n_2\cdots n_m + x_2 n_3 \cdots n_m + \cdots + x_{m-1}n_m + x_m,
\end{eqnarray*}
with $0\leq x_j\leq n_j-1$, for each $j=1,\ldots, m$.

According with the mixed radix representation, the vertex of the $m$-th level of $T_{n_1,\ldots,n_m}$ which is labelled by $x=x_1\ldots x_m$, with $x_i\in X_i$, is identified with the integer $\sum_{j=1}^m x_j u_{m-j}$ belonging to the set $\{0,1,\ldots, N-1\}$.

\begin{example}\rm
The vertices of the third level of the rooted tree $T_{4,3,2}$ depicted in Figure \ref{exampletree} can be identified, from the left to the right, with the integers from $0$ to $23$, listed in increasing order. For instance, the vertex $211$ corresponds to the integer $2\cdot 6 + 1\cdot 2 +1\cdot 1= 15$, since we have $u_2=3\cdot 2 =6$, $u_1=2$, $u_0=1$.
\end{example}

Let us denote by $\mathcal{M}_{m\times n}(\mathbb{R})$ the set of matrices with $m$ rows and $n$ columns over the real field. For every $i=1,\ldots, m$ and $j=1,\ldots, n$, let $E^{i,j}_{m\times n}$ denote the $m\times n$ elementary matrix, with $1$ at the entry $(i,j)$ at row $i$ and column $j$, and $0$ elsewhere. Moreover, let us denote by $O_{m\times n}$ the zero matrix with $m$ rows and $n$ columns.

We recall that the \textit{Kronecker product} of two matrices $A=(a_{ij})_{i=1,\ldots, m; j=1,\ldots, n}\in \mathcal{M}_{m\times n}(\mathbb{R})$ and $B=(b_{hk})_{h=1,\ldots, p; k=1,\ldots, q}\in \mathcal{M}_{p\times q}(\mathbb{R})$ is defined to be the $mp\times nq$ matrix
$$
A\otimes B = \left(
                \begin{array}{ccc}
                  a_{11}B & \cdots & a_{1n}B \\
                  \vdots & \ddots & \vdots \\
                  a_{m1}B & \cdots & a_{mn}B \\
                \end{array}
              \right).
$$
Finally, recall that the Kronecker product of two matrices is, in general, not commutative, that is, $A\otimes B \neq B\otimes A$. On the other hand, the Kronecker product satisfies the following properties:
\begin{enumerate}
\item $A\otimes (B\otimes C) = (A\otimes B)\otimes C$ (associativity);
\item $A\otimes (B+C) = A\otimes B + A\otimes C$; \quad $(A+B)\otimes C = A\otimes C + B\otimes C$ (distributivity);
\item $(A\otimes B)^T = A^T \otimes B^T$ (transposition);
\item $(A\otimes B)^{-1} = A^{-1} \otimes B^{-1}$ (inversion);
\item $(A\otimes B) (C\otimes D) = (AC)\otimes (BD)$ (standard matrix multiplication).
\end{enumerate}

\begin{lemma}\label{lemmaciccino}
Let $E^{i,j}_{m\times n}\in \mathcal{M}_{m\times n}(\mathbb{R})$ and $E^{h,k}_{n\times q}\in \mathcal{M}_{n\times q}(\mathbb{R})$ be two elementary matrices. Then
\begin{eqnarray*}
 E^{i,j}_{m\times n} \cdot E^{h,k}_{n\times q} = \left\{
                                                  \begin{array}{ll}
                                                    O_{m\times q} & \hbox{if }j\neq h \\
                                                    E^{i,k}_{m\times q}, & \hbox{if } j=h.
                                                  \end{array}
                                                \right.
\end{eqnarray*}
\end{lemma}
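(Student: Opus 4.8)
The plan is to prove the identity by a direct entry-by-entry computation using the standard formula for matrix multiplication, since elementary matrices have such a simple structure that the convolution sum collapses immediately. I would encode the single nonzero entry of each factor by means of the Kronecker delta $\delta$, and then track where the two deltas meet inside the summation.

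First I would record the entries of the two factors: the $(p,r)$-entry of $E^{i,j}_{m\times n}$ is $\delta_{pi}\delta_{rj}$, and the $(r,s)$-entry of $E^{h,k}_{n\times q}$ is $\delta_{rh}\delta_{sk}$, for $1\leq p\leq m$, $1\leq r\leq n$, $1\leq s\leq q$. By the definition of the product, the $(p,s)$-entry of $E^{i,j}_{m\times n}\cdot E^{h,k}_{n\times q}$ is then
$$
\sum_{r=1}^n \delta_{pi}\delta_{rj}\,\delta_{rh}\delta_{sk} = \delta_{pi}\,\delta_{sk}\sum_{r=1}^n \delta_{rj}\delta_{rh}.
$$

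The key observation is that the inner sum $\sum_{r=1}^n \delta_{rj}\delta_{rh}$ equals $\delta_{jh}$, because a single index $r$ can coincide with both $j$ and $h$ only when $j=h$, in which case exactly one term contributes. Hence every entry of the product equals $\delta_{pi}\,\delta_{sk}\,\delta_{jh}$, from which the two cases are read off at once: if $j\neq h$ then $\delta_{jh}=0$, so all entries vanish and the product is $O_{m\times q}$; if $j=h$ then the $(p,s)$-entry is $\delta_{pi}\,\delta_{sk}$, which is $1$ precisely when $p=i$ and $s=k$ and $0$ otherwise, so the product is exactly $E^{i,k}_{m\times q}$.

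There is essentially no obstacle here: the only point deserving a moment's care is the index bookkeeping ensuring that the summation index $r$ (ranging over $1,\ldots,n$) is the shared dimension of the two factors, so that $\delta_{rj}$ and $\delta_{rh}$ genuinely interact within the same sum; this is precisely the compatibility condition $j,h\in\{1,\ldots,n\}$ built into the statement. Everything else is routine.
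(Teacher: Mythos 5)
Your proof is correct and follows exactly the route the paper intends: the paper's proof simply declares the identity ``straightforward from the definition of elementary matrix and of standard matrix multiplication,'' and your Kronecker-delta computation $\sum_{r=1}^n \delta_{pi}\delta_{rj}\delta_{rh}\delta_{sk}=\delta_{pi}\delta_{jh}\delta_{sk}$ is precisely that computation carried out in detail.
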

\begin{proof}
The proof is straightforward, and it uses the definition of elementary matrix and of standard matrix multiplication.
\end{proof}

\begin{lemma}\label{secondlemma}
Let $E^{i,j}_{m\times n}\in \mathcal{M}_{m\times n}(\mathbb{R})$ and $E^{h,k}_{p\times q}\in \mathcal{M}_{p\times q}(\mathbb{R})$ be two elementary matrices. Then
$$
E^{i,j}_{m\times n} \otimes E^{h,k}_{p\times q} = E^{(i-1)p+h,(j-1)q+k}_{mp\times nq}.
$$
More generally, we have
$$
E^{i_1,j_1}_{m_1\times n_1}\otimes\cdots \otimes E^{i_t,j_t}_{m_t\times n_t} = E^{\sum_{h=1}^{t-1}\left((i_h-1)\prod_{k=h+1}^tm_{k}\right)+i_t,\sum_{h=1}^{t-1}\left((j_h-1)\prod_{k=h+1}^tn_{k}\right)+j_t}_{\prod_{h=1}^tm_h \times \prod_{h=1}^tn_h}.
$$
\end{lemma}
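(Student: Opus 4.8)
The plan is to establish the two-factor identity first, directly from the definition of the Kronecker product, and then to bootstrap to the general case by induction on $t$, invoking the associativity property (1). For the base case, I would recall that $E^{i,j}_{m\times n}$ has a single nonzero entry, equal to $1$, in position $(i,j)$; hence in the block description of $E^{i,j}_{m\times n}\otimes E^{h,k}_{p\times q}$ every block vanishes except the one sitting in block-row $i$ and block-column $j$, which equals $E^{h,k}_{p\times q}$ itself. To locate the unique nonzero entry inside the full $mp\times nq$ matrix, I would convert block coordinates into absolute ones: block-row $i$ occupies the absolute rows $(i-1)p+1,\ldots,ip$, so the inner row $h$ corresponds to the absolute row $(i-1)p+h$, and symmetrically the inner column $k$ corresponds to the absolute column $(j-1)q+k$. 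This yields $E^{i,j}_{m\times n}\otimes E^{h,k}_{p\times q}=E^{(i-1)p+h,(j-1)q+k}_{mp\times nq}$, as claimed.

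For the general formula I would argue by induction on the number $t$ of factors, the case $t=2$ being exactly the identity just proved. Assuming the statement for $t-1$ factors, I would set $P:=E^{i_1,j_1}_{m_1\times n_1}\otimes\cdots\otimes E^{i_{t-1},j_{t-1}}_{m_{t-1}\times n_{t-1}}$, so that by the inductive hypothesis $P=E^{I,J}_{M\times N}$ with $M=\prod_{h=1}^{t-1}m_h$, $N=\prod_{h=1}^{t-1}n_h$, and
$$I=\sum_{h=1}^{t-2}\Bigl((i_h-1)\prod_{k=h+1}^{t-1}m_k\Bigr)+i_{t-1},\qquad J=\sum_{h=1}^{t-2}\Bigl((j_h-1)\prod_{k=h+1}^{t-1}n_k\Bigr)+j_{t-1}.$$
By associativity the $t$-fold product equals $P\otimes E^{i_t,j_t}_{m_t\times n_t}$, and applying the two-factor identity produces an elementary matrix of size $Mm_t\times Nn_t=\prod_{h=1}^{t}m_h\times\prod_{h=1}^{t}n_h$, with row index $(I-1)m_t+i_t$ and column index $(J-1)n_t+j_t$.

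It then remains to reconcile these indices with those in the statement, and this is the only step where I would be careful, because it hinges on the empty-product convention. I would rewrite $I-1=\sum_{h=1}^{t-1}\bigl((i_h-1)\prod_{k=h+1}^{t-1}m_k\bigr)$, absorbing the summand $i_{t-1}-1$ into the sum by using $\prod_{k=t}^{t-1}m_k=1$. Multiplying through by $m_t$ extends each inner product from $\prod_{k=h+1}^{t-1}$ to $\prod_{k=h+1}^{t}$, whence $(I-1)m_t+i_t=\sum_{h=1}^{t-1}\bigl((i_h-1)\prod_{k=h+1}^{t}m_k\bigr)+i_t$, which is precisely the desired row index; the column index follows identically with $n_k$ replacing $m_k$. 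This closes the induction, and I expect no obstacle beyond this boundary-term bookkeeping, which is elementary but requires attention to the endpoints of the products.
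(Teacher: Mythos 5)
Your proposal is correct and follows exactly the route the paper intends: the paper's proof is simply the remark that the claim is ``an easy induction following from the definition of Kronecker product and from the associativity property,'' and your argument carries out precisely that induction, with the two-factor block computation as base case and associativity driving the inductive step. The careful handling of the empty product $\prod_{k=t}^{t-1}m_k=1$ when reconciling the indices is the right bookkeeping and closes the argument without any gap.
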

\begin{proof}
It is an easy induction following from the definition of Kronecker product and from the associativity property.
\end{proof}
We will denote by $I_n$ the identity matrix of size $n$. Moreover, let ${\bf e}_j^n$ denote the column vector $(0,\ldots, 0,\underbrace{1}_{j-th\ place},0,\ldots,0)^T$ of length $n$, for every $j=1,\ldots, n$.

\begin{lemma}\label{lemmavettorini}
The following identities hold:
\begin{eqnarray*}
\sum_{i = 1,\ldots, n} ({\bf e}^{n}_{i})^T \otimes {\bf e}^{n}_{i} = I_{n};
\end{eqnarray*}
\begin{eqnarray*}
{\bf e}^{n}_{i} \otimes  ({\bf e}^{k}_{j})^T = E^{i,j}_{n\times k},
\end{eqnarray*}
for each $i=1,\ldots, n$ and $j=1,\ldots, k$.
\end{lemma}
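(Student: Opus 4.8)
The plan is to reduce both identities to the two-factor case of Lemma \ref{secondlemma} by recognizing the basis vectors as elementary matrices of degenerate shape. A column vector is an elementary matrix with a single column and a row vector is one with a single row, so that ${\bf e}^{n}_{i} = E^{i,1}_{n\times 1}$, $({\bf e}^{n}_{i})^T = E^{1,i}_{1\times n}$, and $({\bf e}^{k}_{j})^T = E^{1,j}_{1\times k}$. Once this is observed, each identity follows from the formula $E^{i,j}_{m\times n}\otimes E^{h,k}_{p\times q} = E^{(i-1)p+h,\,(j-1)q+k}_{mp\times nq}$ simply by tracking indices.

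For the second identity, I would write ${\bf e}^{n}_{i} \otimes ({\bf e}^{k}_{j})^T = E^{i,1}_{n\times 1}\otimes E^{1,j}_{1\times k}$ and apply Lemma \ref{secondlemma} with first factor of shape $n\times 1$ and second factor of shape $1\times k$. The resulting row index is $(i-1)\cdot 1 + 1 = i$, the column index is $(1-1)\cdot k + j = j$, and the shape is $n\times k$, giving exactly $E^{i,j}_{n\times k}$.

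For the first identity, I would treat each summand on its own. Writing $({\bf e}^{n}_{i})^T \otimes {\bf e}^{n}_{i} = E^{1,i}_{1\times n}\otimes E^{i,1}_{n\times 1}$ and applying Lemma \ref{secondlemma} with first factor of shape $1\times n$ and second of shape $n\times 1$, the row index becomes $(1-1)\cdot n + i = i$ and the column index becomes $(i-1)\cdot 1 + 1 = i$, with shape $n\times n$; hence each summand equals the diagonal elementary matrix $E^{i,i}_{n\times n}$. Summing over $i=1,\ldots,n$ then gives $\sum_{i=1}^n E^{i,i}_{n\times n} = I_n$.

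There is no real obstacle here; the only point requiring care is the order of the two factors, since the Kronecker product is noncommutative and the index formula of Lemma \ref{secondlemma} is order-sensitive (the transposed vector sits on the left in the first identity and on the right in the second). One could instead verify both identities by a direct block computation straight from the definition of the Kronecker product, noting that the single nonzero entry of one factor simply places a copy of the other factor into a single block, but the route through Lemma \ref{secondlemma} is cleaner.
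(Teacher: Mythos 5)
Your proof is correct, and all the index bookkeeping checks out. The paper itself dismisses this lemma with ``It is an easy computation,'' which implicitly means a direct verification from the definition of the Kronecker product (the route you mention at the end: the single nonzero entry of one factor places one copy of the other factor in a single block). Your main route is genuinely different and more structured: by identifying ${\bf e}^{n}_{i} = E^{i,1}_{n\times 1}$ and $({\bf e}^{n}_{i})^T = E^{1,i}_{1\times n}$, you make both identities corollaries of the index formula of Lemma \ref{secondlemma}, so the second identity is literally a degenerate case of that formula, and the first reduces to $\sum_{i=1}^n E^{i,i}_{n\times n} = I_n$. This is non-circular, since Lemma \ref{secondlemma} precedes Lemma \ref{lemmavettorini} and is proved by induction directly from the definition of the Kronecker product, without using the present lemma. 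What your approach buys is uniformity (one index formula handles everything, including the order-sensitivity you correctly flag, since the Kronecker product does not commute); what the direct computation buys is independence from Lemma \ref{secondlemma} and a slightly shorter argument. Either is acceptable; yours is arguably the cleaner write-up of what the paper left unsaid.
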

\begin{proof}
It is an easy computation.
\end{proof}

In what follows, we will denote by $Sym(m)$ the symmetric group on $m$ elements. We recall that a \textit{permutation matrix} of size $m$ is an $m\times m$ matrix that has exactly one entry equal to $1$ in each row and each column and all $0$'s elsewhere. Note that each such matrix represents a permutation $\sigma$ of $m$ elements and this correspondence is clearly bijective. More precisely, the permutation $\sigma\in Sym(m)$ corresponds to the permutation matrix $P_{\sigma}=(p_{ij})_{i,j=1,\ldots, m}$ such that $p_{ij}=1$ if and only if $\sigma(j) = i$.\\ We will regard a permutation matrix as acting on column vectors of length $m$ whose entries are all $0$'s except that in one position, where the entry is $1$. More precisely, one has: $P_\sigma {\bf e}_j^m = {\bf e}_i^m$ if and only if $\sigma(j) = i$.

\section{Shuffling matrices}\label{mainsection}

In this section, we give the main definition of the paper, which extends the concept of perfect shuffle matrix studied by Rose and Davio. More precisely, we are going to define a family of permutation matrices, called shuffling matrices, obtained as linear combinations of iterated Kronecker products of elementary matrices. Such matrices are indexed by an $m$-tuple $n_1,\ldots, n_m$, with $n_i\geq 2$ for each $i$, and by a permutation $\sigma\in Sym(m)$; moreover, they can be regarded as acting on the vertices of the $m$-th level of a rooted tree $T_{n_1,\ldots, n_m}$ of branch indices $(n_1,\ldots, n_m)$ (see Section \ref{sectionmaindefinition}) or, equivalently, on vectors of length $N=n_1\cdots n_m$. The permutations associated with these matrices will be called shuffling permutations, and they were studied by Ronse in \cite{ronse} by merely using a mixed radix representation approach, without introducing permutation matrices. It turns that the choice $\sigma= (1\ 2\ \ldots\ m)$ of $Sym(m)$ induces the perfect shuffle on $N$ elements. Our set of permutation matrices has very nice properties that we discuss in the next sections. In particular, we highlight an interesting connection with the algebra induced by the Kronecker product of matrices (Section \ref{subsectionconjugacy}). We will examine the groups generated by shuffling permutations in Section \ref{sectiongruppi} and finally, in Section \ref{sectionfourier}, we will consider an interesting application to the Fast Fourier Transform.

\subsection{Definitions and basic properties}\label{sectionmaindefinition}

Our aim is to associate, with every permutation $\sigma \in Sym(m)$, a permutation matrix of size $N$. We will denote by $\widetilde{\sigma}$ the corresponding permutation of $Sym(N)$. Recall that the elements $\{0,1,2,\ldots, N-1\}$ can be identified with the vertices of the $m$-th level of the rooted tree $T_{n_1, n_2, \ldots, n_m}$ with branch indices $(n_1, n_2, \ldots, n_m)$, as described in Section \ref{sectionpreliminaries}.

We move from the following simple remark. The perfect shuffle permutation matrix $P_r^s$, introduced by Rose in order to describe the perfect shuffle of a set of $rs$ elements, can be rewritten as
$$
P_r^s = \sum_{\substack{i = 1,\ldots,r \\ j=1,\ldots, s}} E^{(i-1)s+j, (j-1)r+i}_{rs \times rs} =\sum_{\substack{i = 1,\ldots,r \\ j=1,\ldots, s}} E^{i,j}_{r\times s} \otimes  E^{j,i}_{s\times r} =  \sum_{i=1,\ldots, r}{\bf e}^r_i \otimes I_s \otimes ({\bf e}^r_i)^T,
$$
where the second equality follows from Lemma \ref{secondlemma} and the third equality from Lemma \ref{lemmavettorini}.

More generally, the following proposition describes the perfect shuffle on $N=n_1\cdots n_m$ elements (regarded as disposed in an $(n_1\cdots n_{m-1})\times n_m$ array).
\begin{prop}\label{proposizioneemme}
Let $T_{n_1,\ldots, n_m}$ be the rooted tree with branch indices $(n_1,\ldots, n_m)$. Then the permutation matrix
\begin{eqnarray}\label{osvaldo}
P=\sum_{i_m=1,\ldots, n_m}  {\bf e}^{n_m}_{i_m} \otimes I_{n_1\cdots n_{m-1}}\otimes ({\bf e}^{n_m}_{i_m})^T
\end{eqnarray}
induces the perfect shuffle on $N$ elements.
\end{prop}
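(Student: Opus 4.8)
The plan is to evaluate $P$ on each standard basis vector and check that it reproduces the reordering prescribed by the perfect shuffle. Set $s=n_1\cdots n_{m-1}$ and $r=n_m$, so that $N=rs$ and the $N$ elements are displayed in the $s\times r$ array, filled row by row; reading this array column by column --- the operation described in the tree picture of Figure \ref{figuraaggiuntina} --- moves the element labelled $x=ar+b$, with $0\le a\le s-1$ and $0\le b\le r-1$, to position $bs+a$. In mixed radix terms this is nothing but the cyclic shift $x_1\ldots x_m\mapsto x_mx_1\ldots x_{m-1}$ of the digits of $x$. Hence it suffices to prove that $P\,{\bf e}^{N}_{x+1}={\bf e}^{N}_{y+1}$ with $y=bs+a$, keeping in mind that the integer $x\in\{0,\ldots,N-1\}$ corresponds to the $(x+1)$-th basis vector.

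First I would express the basis vector attached to $x$ as a Kronecker product compatible with the three tensor factors of $P$. By Lemma \ref{secondlemma} applied to the column vectors ${\bf e}^{s}_{a+1}=E^{a+1,1}_{s\times 1}$ and ${\bf e}^{r}_{b+1}=E^{b+1,1}_{r\times 1}$, one has ${\bf e}^{s}_{a+1}\otimes {\bf e}^{r}_{b+1}={\bf e}^{N}_{ar+b+1}={\bf e}^{N}_{x+1}$; writing the left-hand side as ${\bf e}^{1}_{1}\otimes {\bf e}^{s}_{a+1}\otimes {\bf e}^{r}_{b+1}$ aligns it with the factorisation of $P$ in (\ref{osvaldo}).

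Then I would compute $P\,{\bf e}^{N}_{x+1}$ summand by summand. Applying the mixed product property (5) to each term ${\bf e}^{r}_{i}\otimes I_{s}\otimes ({\bf e}^{r}_{i})^{T}$ of $P$ yields $({\bf e}^{r}_{i}{\bf e}^{1}_{1})\otimes (I_{s}{\bf e}^{s}_{a+1})\otimes (({\bf e}^{r}_{i})^{T}{\bf e}^{r}_{b+1})$, whose last factor is the scalar $({\bf e}^{r}_{i})^{T}{\bf e}^{r}_{b+1}=\delta_{i,b+1}$. Only the term $i=b+1$ survives, giving $P\,{\bf e}^{N}_{x+1}={\bf e}^{r}_{b+1}\otimes {\bf e}^{s}_{a+1}$. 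A second use of Lemma \ref{secondlemma} identifies this vector with ${\bf e}^{N}_{bs+a+1}$, that is, with the basis vector attached to the integer $y=bs+a$. This is exactly the target of the perfect shuffle, and rewriting $P\,{\bf e}^{N}_{x+1}$ in full mixed radix form as ${\bf e}^{n_m}_{x_m+1}\otimes {\bf e}^{n_1}_{x_1+1}\otimes\cdots\otimes {\bf e}^{n_{m-1}}_{x_{m-1}+1}$ makes the cyclic shift of the digits transparent.

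The step that most requires care is the index bookkeeping: the element labels range in $\{0,\ldots,N-1\}$ while basis vectors and matrix entries are $1$-indexed, so the shift $x\mapsto x+1$ must be carried consistently, and one must fix whether $P$ encodes the movement of elements or the sequence they form. As an independent check I would verify that, for $r=n_m$ and $s=n_1\cdots n_{m-1}$, formula (\ref{osvaldo}) is literally Rose's matrix $P_r^s$ in the third of the three equivalent forms displayed just before the statement; the computation above then reproduces Rose's relation between the row-wise and column-wise readings of the array, so that $P$ indeed realises the perfect shuffle on $N$ elements.
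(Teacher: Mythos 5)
Your proof is correct, and it takes a genuinely different computational route from the paper's, even though both arguments amount to checking the action of $P$ on a generic basis vector. The paper first expands the matrix itself: it inserts $I_{n_j}=\sum_{i_j}({\bf e}^{n_j}_{i_j})^T\otimes{\bf e}^{n_j}_{i_j}$ into the middle factor and invokes Lemma \ref{secondlemma} to rewrite $P$ as a sum of $N\times N$ elementary matrices indexed by all multi-indices $(i_1,\ldots,i_m)$ (formula \eqref{seratasfumata}), then applies this to ${\bf e}^N_x$ and uses the uniqueness of the mixed radix representation to isolate the single surviving summand and read off its row index. You never expand $P$: you instead factor the basis vector as ${\bf e}^1_1\otimes{\bf e}^s_{a+1}\otimes{\bf e}^r_{b+1}$, compatibly with the three tensor factors of each summand of \eqref{osvaldo}, and let the mixed product property together with the orthogonality $({\bf e}^r_i)^T{\bf e}^r_{b+1}=\delta_{i,b+1}$ kill all terms but one; moreover you keep only the coarse two-factor grouping $N=sr$, which is all the statement needs, since the perfect shuffle is defined by transposing the $s\times r$ array, whereas the paper carries the full $m$-factor decomposition throughout. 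What your route buys is brevity and transparency: the cyclic shift of the mixed radix digits and the literal identification of \eqref{osvaldo} with Rose's matrix come out at once. What the paper's route buys is reusability: the expansion \eqref{seratasfumata} is exactly the template for the general Definition \ref{definitionmaindefinition} of shuffling matrices and for the proof of Proposition \ref{propositionaction}, so its heavier multi-index bookkeeping is an investment exploited later in the paper. Your handling of the index conventions (labels in $\{0,\ldots,N-1\}$ versus $1$-indexed basis vectors, and the column-action convention $P{\bf e}^N_{x+1}={\bf e}^N_{\widetilde{\sigma}(x)+1}$) is consistent, so there is no gap.
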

\begin{proof}
First of all, notice that, by Lemma \ref{lemmavettorini} and Lemma \ref{secondlemma}, and by using the properties of the Kronecker product, the matrix \eqref{osvaldo} can be rewritten as
\begin{eqnarray}\label{seratasfumata}
P &=& \sum_{\substack{i_j = 1,\ldots,n_j \\ j=1,\ldots, m}} {\bf e}^{n_m}_{i_m} \otimes \left(({\bf e}^{n_1}_{i_1})^T\otimes {\bf e}^{n_1}_{i_1}\right) \otimes \cdots \otimes \left(({\bf e}^{n_{m-1}}_{i_{m-1}})^T\otimes {\bf e}^{n_{m-1}}_{i_{m-1}}\right) \otimes ({\bf e}^{n_m}_{i_m})^T\nonumber\\
&=&\sum_{\substack{i_j = 1,\ldots,n_j \\ j=1,\ldots, m}} E^{i_{m},i_1}_{n_{m}\times n_1} \otimes  E^{i_{1},i_2}_{n_{1}\times n_2} \otimes
\cdots \otimes E^{i_{m-1},i_m}_{n_{m-1}\times n_m}\nonumber\\
&=& E^{(i_m-1)n_1\cdots n_{m-1}+(i_1-1)n_2\cdots n_{m-1} +\cdots +(i_{m-2}-1)n_{m-1}+i_{m-1}, (i_1-1)n_2\cdots n_{m} +\cdots +(i_{m-1}-1)n_{m}+i_{m}}_{N\times N}.
\end{eqnarray}
Now let $({\bf e}^N_x)^T$ be the column vector with all $0$'s, except for an entry equal to $1$ at position
\begin{eqnarray*}
x &=& x_1n_2\cdots n_m + x_2n_3\cdots n_m + \cdots + x_{m-1}n_m+x_m\\
 &=& \underbrace{(x_1n_2\cdots n_{m-1}+\cdots +x_{m-2}n_{m-1}+x_{m-1})}_{i}n_m+\underbrace{x_m}_{j}.
\end{eqnarray*}
Then by using \eqref{seratasfumata} we get $P{\bf e}^N_x = {\bf e}^N_y$, with
$$
y = \underbrace{x_m}_{i'}n_1\cdots n_{m-1}+ \underbrace{x_1 n_2\cdots n_{m-1} + \cdots +x_{m-2}n_{m-1} + x_{m-1}}_{j'}.
$$
This shows that the matrix $P$ realizes the perfect shuffle of $N$ elements, regarded as disposed in an $(n_1\cdots n_{m-1})\times n_m$ array. In other words, we are passing from the configuration of the $N$ elements $\{a_i, i=0,\ldots, N-1\}$ given by the $(n_1\cdots n_{m-1})\times n_m$ array
$$
A = (A_{ij}) = \left(
      \begin{array}{cccc}
        a_0 & a_1 & \cdots & a_{n_m-1} \\
        a_{n_m} & a_{n_m+1} & \cdots & a_{2n_m-1} \\
        \vdots &  &  & \vdots \\
        a_{(n_1\cdots n_{m-1}-1)n_m} & \cdots & \cdots & a_{n_1\cdots n_m-1} \\
      \end{array}
    \right)
$$
with $A_{ij} = a_{in_m+j}$, for $0\leq i\leq n_1\cdots n_{m-1}-1$ and $0\leq j\leq n_m-1$, to the configuration $B=(B_{i'j'}) = A^T$, with $B_{i'j'} = b_{i'n_1\cdots n_{m-1}+j'}$, for $0\leq i' \leq n_m-1$ and $0\leq j'\leq n_1\cdots n_{m-1}-1$. This proves the assertion.
\end{proof}

\begin{example}\label{esempioemme}\rm
Let $m=3$; $n_1=n_2=2$ and $n_3=3$, so that we have $N=12$. The matrix
\begin{eqnarray*}
P &=& \sum_{\substack{i_j = 1,\ldots,n_j \\ j=1,2,3}} E^{i_3,i_1}_{3\times 2} \otimes E^{i_1,i_2}_{2\times 2} \otimes E^{i_2,i_3}_{2\times 3} = \sum_{i_3=1,2,3}{\bf e}^3_{i_3} \otimes I_4 \otimes ({\bf e}^3_{i_3})^T
%&=& E^{1,1}_{3\times 2} \otimes E^{1,1}_{2\times 2} \otimes E^{1,1}_{2\times 3}+ E^{2,1}_{3\times 2} \otimes E^{1,1}_{2\times 2} \otimes E^{1,2}_{2\times 3}+ E^{3,1}_{3\times 2} \otimes E^{1,1}_{2\times 2} \otimes E^{1,3}_{2\times 3}\\
%&+& E^{1,1}_{3\times 2} \otimes E^{1,2}_{2\times 2} \otimes E^{2,1}_{2\times 3}+ E^{2,1}_{3\times 2} \otimes E^{1,2}_{2\times 2} \otimes E^{2,2}_{2\times 3}+ E^{3,1}_{3\times 2} \otimes E^{1,2}_{2\times 2} \otimes E^{2,3}_{2\times 3}\\
%&+& E^{1,2}_{3\times 2} \otimes E^{2,1}_{2\times 2} \otimes E^{1,1}_{2\times 3}+ E^{2,2}_{3\times 2} \otimes E^{2,1}_{2\times 2} \otimes E^{1,2}_{2\times 3}+ E^{3,2}_{3\times 2} \otimes E^{2,1}_{2\times 2} \otimes E^{1,3}_{2\times 3}\\
%&+& E^{1,2}_{3\times 2} \otimes E^{2,2}_{2\times 2} \otimes E^{2,1}_{2\times 3}+ E^{2,2}_{3\times 2} \otimes E^{2,2}_{2\times 2} \otimes E^{2,2}_{2\times 3}+ E^{3,2}_{3\times 2} \otimes E^{2,2}_{2\times 2} \otimes E^{2,3}_{2\times 3}.
\end{eqnarray*}
performs the perfect shuffle on $12$ elements, identified with the vertices of the third level of the tree $T_{2,2,3}$ as in Figure \ref{FigureExample}.
\begin{figure}[h] \begin{center}  \psfrag{000}{$000$}\psfrag{001}{$001$} \psfrag{002}{$002$} \psfrag{010}{$010$} \psfrag{011}{$011$} \psfrag{012}{$012$} \psfrag{100}{$100$} \psfrag{101}{$101$} \psfrag{102}{$102$} \psfrag{110}{$110$} \psfrag{111}{$111$} \psfrag{112}{$112$}

\psfrag{0}{$0$} \psfrag{1}{$1$} \psfrag{2}{$2$} \psfrag{3}{$3$} \psfrag{4}{$4$} \psfrag{5}{$5$} \psfrag{6}{$6$} \psfrag{7}{$7$} \psfrag{8}{$8$} \psfrag{9}{$9$} \psfrag{10}{$10$}   \psfrag{11111}{$11$} \psfrag{00}{$0$} \psfrag{11}{$3$} \psfrag{22}{$6$} \psfrag{33}{$9$} \psfrag{44}{$1$} \psfrag{55}{$4$} \psfrag{66}{$7$} \psfrag{77}{$10$} \psfrag{88}{$2$} \psfrag{99}{$5$} \psfrag{1010}{$8$}   \psfrag{1111}{$11$}

\psfrag{a0}{$000$} \psfrag{a1}{$010$} \psfrag{a2}{$100$} \psfrag{a3}{$110$} \psfrag{a4}{$001$} \psfrag{a5}{$011$} \psfrag{a6}{$101$} \psfrag{a7}{$111$} \psfrag{a8}{$002$} \psfrag{a9}{$012$} \psfrag{a10}{$102$}   \psfrag{a11}{$112$}
\includegraphics[width=0.7\textwidth]{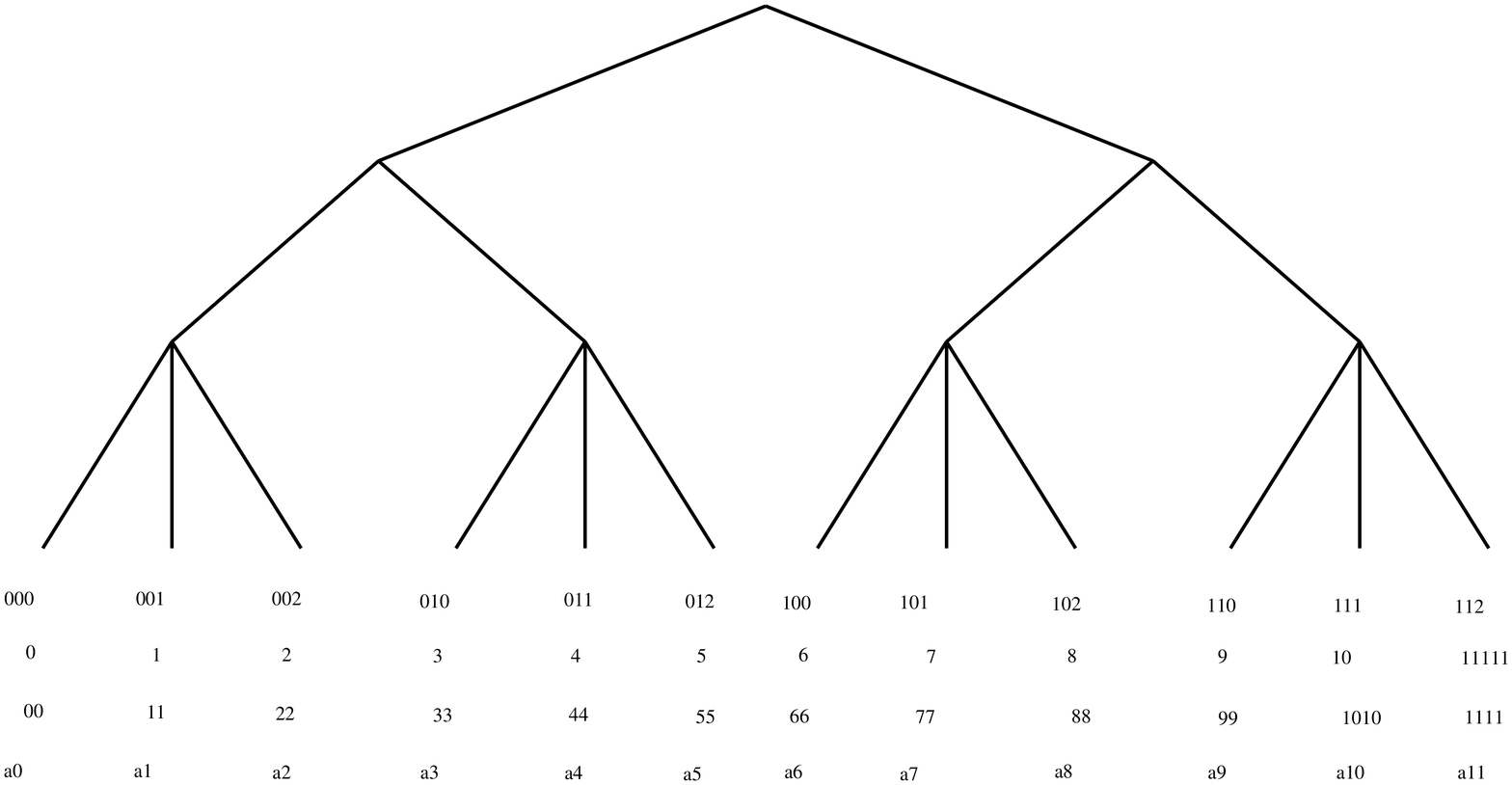} \caption{Example of perfect shuffle for $m=3$; $n_1=n_2=2, n_3=3$.}  \label{FigureExample}
\end{center}
\end{figure}
We say that the \textit{horizontal representation} of the permutation associated with $P$ is
$$
0\ 3\ 6\ 9\ 1\ 4\ 7\ 10\ 2\ 5\ 8\ 11;
$$
this corresponds to the permutation mapping $0$ to $0$, $1$ to $4$, $2$ to $8$, and so on. Moreover, its cyclic decomposition is $(0)(1\ 4\ 5\ 9\ 3)(2\ 8\ 10\ 7\ 6)(11)$, as it can be easily verified.
In the tree interpretation, we can think that, from the subtree of depth $1$ rooted at each vertex of the second level of $T_{2,2,3}$, we take the first element (that is, $0,3,6,9$), then the second element (that is, $1,4,7,10$), finally the third element (that is, $2,5,8,11$).
\end{example}

This analysis motivates to the following definition of shuffling matrix.

\begin{defi}\label{definitionmaindefinition}
The shuffling matrix $P^{\sigma}_{n_1, n_2, \ldots, n_m}$ associated with the permutation $\sigma\in Sym(m)$ and the branch indices $(n_1,\ldots, n_m)$ is the $N\times N$ permutation matrix
\begin{eqnarray}\label{maindefinition}
P^{\sigma}_{n_1, n_2, \ldots, n_m} = \sum_{\substack{i_j = 1,\ldots,n_j \\ j=1,\ldots, m}} E^{i_{\sigma^{-1}(1)},i_1}_{n_{\sigma^{-1}(1)}\times n_1} \otimes \cdots \otimes E^{i_{\sigma^{-1}(m)},i_m}_{n_{\sigma^{-1}(m)}\times n_m}.
\end{eqnarray}
\end{defi}

\begin{example}\label{esempionumerato}\rm
Let $m=3$; $n_1=n_2=2$ and $n_3=3$, so that we have $N=12$. Take the permutation $\sigma=(13)\in Sym(3)$. Then:
\begin{eqnarray*}
P^{(13)}_{2,2,3} &=& \sum_{\substack{i_j = 1,\ldots,n_j \\ j=1,2,3}} E^{i_3,i_1}_{3\times 2} \otimes E^{i_2,i_2}_{2\times 2} \otimes E^{i_1,i_3}_{2\times 3} = \sum_{\substack{i_1 = 1,2 \\ i_3=1,2,3}} E^{i_3,i_1}_{3\times 2} \otimes I_2 \otimes E^{i_1,i_3}_{2\times 3}.
%&=&   E^{1,1}_{3\times 2} \otimes E^{1,1}_{2\times 2} \otimes E^{1,1}_{2\times 3}+ E^{2,1}_{3\times 2} \otimes E^{1,1}_{2\times 2} \otimes E^{1,2}_{2\times 3}+ E^{3,1}_{3\times 2} \otimes E^{1,1}_{2\times 2} \otimes E^{1,3}_{2\times 3}\\
%&+&   E^{1,1}_{3\times 2} \otimes E^{2,2}_{2\times 2} \otimes E^{1,1}_{2\times 3}+ E^{2,1}_{3\times 2} \otimes E^{2,2}_{2\times 2} \otimes E^{1,2}_{2\times 3}+ E^{3,1}_{3\times 2} \otimes E^{2,2}_{2\times 2} \otimes E^{1,3}_{2\times 3}\\
%&+&   E^{1,2}_{3\times 2} \otimes E^{1,1}_{2\times 2} \otimes E^{2,1}_{2\times 3}+ E^{2,2}_{3\times 2} \otimes E^{1,1}_{2\times 2} \otimes E^{2,2}_{2\times 3}+ E^{3,2}_{3\times 2} \otimes E^{1,1}_{2\times 2} \otimes E^{2,3}_{2\times 3}\\
%&+&   E^{1,2}_{3\times 2} \otimes E^{2,2}_{2\times 2} \otimes E^{2,1}_{2\times 3}+ E^{2,2}_{3\times 2} \otimes E^{2,2}_{2\times 2} \otimes E^{2,2}_{2\times 3}+ E^{3,2}_{3\times 2} \otimes E^{2,2}_{2\times 2} \otimes E^{2,3}_{2\times 3}
\end{eqnarray*}
We have represented the action of the permutation $\widetilde{\sigma}$ induced by $P^{(13)}_{2,2,3}$ on the third level of $T_{2,2,3}$ in Figure \ref{FigureExamplepippino}.

\begin{figure}[h] \begin{center}  \psfrag{000}{$000$}\psfrag{001}{$001$} \psfrag{002}{$002$} \psfrag{010}{$010$} \psfrag{011}{$011$} \psfrag{012}{$012$} \psfrag{100}{$100$} \psfrag{101}{$101$} \psfrag{102}{$102$} \psfrag{110}{$110$} \psfrag{111}{$111$} \psfrag{112}{$112$}

\psfrag{0}{$0$} \psfrag{1}{$1$} \psfrag{2}{$2$} \psfrag{3}{$3$} \psfrag{4}{$4$} \psfrag{5}{$5$} \psfrag{6}{$6$} \psfrag{7}{$7$} \psfrag{8}{$8$} \psfrag{9}{$9$} \psfrag{10}{$10$}   \psfrag{11111}{$11$}
\psfrag{sigma}{$\widetilde{\sigma}$}
\psfrag{00}{$0$} \psfrag{11}{$6$} \psfrag{22}{$3$} \psfrag{33}{$9$} \psfrag{44}{$1$} \psfrag{55}{$7$} \psfrag{66}{$4$} \psfrag{77}{$10$} \psfrag{88}{$2$} \psfrag{99}{$8$} \psfrag{1010}{$5$}   \psfrag{1111}{$11$}

\psfrag{a0}{$000$} \psfrag{a1}{$100$} \psfrag{a2}{$010$} \psfrag{a3}{$110$} \psfrag{a4}{$001$} \psfrag{a5}{$101$} \psfrag{a6}{$011$} \psfrag{a7}{$111$} \psfrag{a8}{$002$} \psfrag{a9}{$102$} \psfrag{a10}{$012$}   \psfrag{a11}{$112$}
\includegraphics[width=0.7\textwidth]{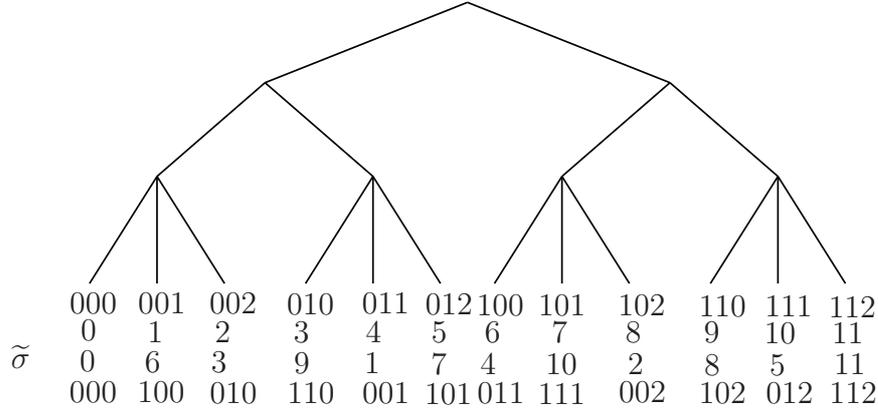} \caption{Shuffling permutation for $m=3$; $n_1=n_2=2, n_3=3$; $\sigma =(1\ 3)$.}  \label{FigureExamplepippino}
 \end{center}
\end{figure}
The horizontal representation of $\widetilde{\sigma}$ is $0\ 6\ 3\ 9\ 1\ 7\ 4\ 10\ 2\ 8\ 5\ 11$; its cyclic decomposition is $\widetilde{\sigma} = (0)(1\ 4\ 6)(2\ 8\ 9\ 3)(5\ 10\ 7)(11)$.
\end{example}

\begin{remark}\label{remarknuovissimo}\rm
Notice that, if we let the same permutation $\sigma=(1\ 3)\in Sym(3)$ act on the rooted tree $T_{2,3,2}$, so that $n_1=n_3=2$ and $n_2=3$ in this case, then we get the matrix permutation $P^{(13)}_{2,3,2} = \sum_{i_1,i_3=1,2} E^{i_3,i_1}_{2\times 2} \otimes I_3 \otimes E^{i_1,i_3}_{2\times 2}$, inducing the permutation $\widetilde{\sigma}\in Sym(12)$ whose cyclic decomposition is $\widetilde{\sigma} = (0)(1\ 6)(2)(3\ 8)(4)(5\ 10)(7)(9)(11)$, showing that the permutation of $N$ elements induced by $\sigma\in Sym(m)$ does depend on the order of the branch indices $(n_1, \ldots, n_m)$.
\end{remark}

It can be seen that, in the example in Figure \ref{FigureExample}, the elements of the third level of the tree $T_{2,2,3}$ are reordered by $\widetilde{\sigma}$ according with the following interpretation: in the starting configuration, the lexicographic order is indexed by the branch indices $(n_1,n_2,n_3)$, so that when we read the elements from the left to the right we have that the first coordinate which changes is the third one, then we have the second one, finally the first one: in fact, the first coordinate is equal to $0$ for the first half of the elements. Alternatively, we can think that, in the mixed radix representation of the integer $x = x_1x_2x_3\in \{0,\ldots,11\}$, the \lq\lq most important\rq\rq coordinate is $x_1$, then we have $x_2$ and finally $x_3$, as the coordinate $x_3$ is the first to be changed.\\ In the new configuration of the twelve elements under the action of $\widetilde{\sigma}$, with $\sigma=(1\ 2\ 3)$, we have, on the other hand, that the integers from $0$ to $11$ are rearranged according with a lexicographic order indexed by the branch indices $(n_3,n_1,n_2)$, so that now the third coordinate is the most important, then we have the first coordinate and finally the second coordinate becomes the less important.\\ A similar remark can be done in the case of Figure \ref{FigureExamplepippino}, where the most important coordinate after the action is the third coordinate, then we have the second coordinate, finally the first coordinate.

These are not special cases, as the analogous property holds in the general case. More specifically, the following proposition holds.

\begin{prop}\label{propositionaction}
Let $T_{n_1,\ldots, n_m}$ be the rooted tree with branch indices $(n_1,\ldots, n_m)$, and let $\sigma\in Sym(m)$. Let, as usual, $N = \prod_{i=1}^mn_i$. Order the integers from $0$ to $N-1$ in the lexicographic order induced by the mixed radix representation with respect to the basis vector $[n_1, n_2, \ldots, n_m]$, so that the most important coordinate of the element $x=x_1x_2\ldots x_m$ is $x_1$, and the less important coordinate of $x$ is $x_m$.
Then the image of the vertex of the $m$-th level of $T_{n_1,\ldots, n_m}$, which is identified with the integer $x = \sum_{j=1}^m x_j u_{m-j}$, under the action of $\widetilde{\sigma}$ is
\begin{eqnarray*}
\widetilde{\sigma}(x) &=& \sum_{j=1}^{m}x_{\sigma^{-1}(j)} \sigma^{-1}(u_{m-j}),
\end{eqnarray*}
with $\sigma^{-1}(u_i) = \prod_{j=0}^{i-1}n_{\sigma^{-1}(m-j)}$ for every $i=1,\ldots, m-1$ and $\sigma^{-1}(u_0) = u_0 =1$. More explicitly:
\begin{eqnarray*}
\widetilde{\sigma}(x) &=& x_{\sigma^{-1}(1)} n_{\sigma^{-1}(2)} \cdots n_{\sigma^{-1}(m)} + x_{\sigma^{-1}(2)} n_{\sigma^{-1}(3)} \cdots n_{\sigma^{-1}(m)} +\cdots\\
&+& x_{\sigma^{-1}(m-1)}n_{\sigma^{-1}(m)} + x_{\sigma^{-1}(m)}.
\end{eqnarray*}
\end{prop}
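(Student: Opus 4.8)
The plan is to follow the strategy of Proposition~\ref{proposizioneemme}: collapse each summand of \eqref{maindefinition} into a single elementary matrix by means of Lemma~\ref{secondlemma}, and then read off its row and column indices. Fix a multi-index $(i_1,\ldots,i_m)$ with $1\leq i_j\leq n_j$. The $h$-th tensor factor of the corresponding summand is $E^{i_{\sigma^{-1}(h)},\,i_h}_{n_{\sigma^{-1}(h)}\times n_h}$, so its row index, row size, column index and column size are $i_{\sigma^{-1}(h)}$, $n_{\sigma^{-1}(h)}$, $i_h$ and $n_h$, respectively. Applying the general formula of Lemma~\ref{secondlemma} then shows that the summand equals $E^{R,C}_{N\times N}$, where
\begin{eqnarray*}
R &=& \sum_{h=1}^{m-1}\left((i_{\sigma^{-1}(h)}-1)\prod_{k=h+1}^m n_{\sigma^{-1}(k)}\right)+i_{\sigma^{-1}(m)},\\
C &=& \sum_{h=1}^{m-1}\left((i_h-1)\prod_{k=h+1}^m n_{k}\right)+i_m,
\end{eqnarray*}
and the total size is $\prod_{h=1}^m n_{\sigma^{-1}(h)}\times\prod_{h=1}^m n_h=N\times N$, since $\sigma^{-1}$ merely permutes the factors.

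Next I would pass from the $1$-indexed entries of the basis vectors to the $0$-indexed integers labelling the vertices, by setting $i_j=x_j+1$ with $0\leq x_j\leq n_j-1$. Using the identity $u_{m-j}=\prod_{k=j+1}^m n_k$, which is immediate from the definition of the weight vector, the column index becomes $C=\sum_{j=1}^m x_j u_{m-j}+1=x+1$. Thus, for a fixed $x$, the uniqueness of the mixed radix representation guarantees that exactly one summand of \eqref{maindefinition} has column index $x+1$, namely the one indexed by the digits of $x$, while all the other summands annihilate ${\bf e}^N_{x+1}$. Since $E^{R,C}_{N\times N}{\bf e}^N_{C}={\bf e}^N_{R}$, we obtain $P^{\sigma}_{n_1,\ldots,n_m}\,{\bf e}^N_{x+1}={\bf e}^N_{R}$ for that value of $R$.

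It then remains to identify $R$ with $\widetilde{\sigma}(x)+1$. The key computation is the weight identity $\sigma^{-1}(u_{m-j})=\prod_{k=j+1}^m n_{\sigma^{-1}(k)}$, obtained by substituting $l'=m-l$ into the defining product $\sigma^{-1}(u_i)=\prod_{l=0}^{i-1}n_{\sigma^{-1}(m-l)}$. Feeding $i_{\sigma^{-1}(h)}=x_{\sigma^{-1}(h)}+1$ into the expression for $R$ and using this identity yields
\begin{eqnarray*}
R &=& \sum_{h=1}^{m-1}x_{\sigma^{-1}(h)}\prod_{k=h+1}^m n_{\sigma^{-1}(k)}+x_{\sigma^{-1}(m)}+1 = \sum_{j=1}^m x_{\sigma^{-1}(j)}\,\sigma^{-1}(u_{m-j})+1,
\end{eqnarray*}
so that $R=\widetilde{\sigma}(x)+1$. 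Restoring the $0$-indexing of the vertices gives precisely the claimed formula, and its explicit expansion follows by writing out the product $\sigma^{-1}(u_{m-j})=n_{\sigma^{-1}(j+1)}\cdots n_{\sigma^{-1}(m)}$.

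I expect the only genuine obstacle to be bookkeeping rather than anything conceptual: correctly threading the permutation $\sigma^{-1}$ through the index formula of Lemma~\ref{secondlemma}, keeping track of the systematic shift between the $1$-indexed basis vectors ${\bf e}^N_{\bullet}$ and the $0$-indexed integer labels $x$, and verifying the reindexing in the weight identity. Some care is also needed to confirm that the row and column sizes multiply to give an $N\times N$ matrix, which relies on $\sigma^{-1}$ being a bijection of the set of tensor factors.
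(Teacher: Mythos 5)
Your proposal is correct and follows essentially the same route as the paper's own proof: collapse each summand of \eqref{maindefinition} into a single elementary matrix via Lemma~\ref{secondlemma}, use the uniqueness of the mixed radix representation to isolate the unique summand acting nontrivially on ${\bf e}^N_x$, and read off the row index as $\widetilde{\sigma}(x)$. If anything, your bookkeeping is slightly more careful than the paper's, which silently conflates the $0$-indexed integer labels with the $1$-indexed matrix positions (its digit identification ends with $i_m=x_m$ rather than $i_m=x_m+1$), an off-by-one that your consistent use of $C=x+1$ and $R=\widetilde{\sigma}(x)+1$ avoids.
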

\begin{proof}
Observe that the shuffling matrix $P^{\sigma}_{n_1,\ldots, n_m}$ defined in \eqref{maindefinition} can be rewritten as
$$
P^{\sigma}_{n_1, n_2, \ldots, n_m} = \sum_{\substack{i_j = 1,\ldots,n_j \\ j=1,\ldots, m}} E^{\sum_{r=1}^{m-1}\left((i_{\sigma^{-1}(r)}-1)\prod_{s=r+1}^mn_{\sigma^{-1}(s)}\right)
+i_{\sigma^{-1}(m)},\sum_{h=1}^{m-1}\left((i_h-1)\prod_{k=h+1}^mn_k\right)+i_m}_{N \times N},
$$
according with Lemma \ref{secondlemma}. Now let
$$
x = \sum_{j=1}^m x_j u_{m-j} = x_1n_2\cdots n_m + x_2 n_3\cdots n_m + \cdots + x_{m-1}n_m + x_m
$$
be the representation of $x$ with respect to the weight vector $[u_{m-1}, u_{m-2}, \ldots, u_0]$. We will determine the image of $x$ by computing the matrix multiplication of the matrix $P^{\sigma}_{n_1, n_2, \ldots, n_m}$ with the column vector ${\bf e}_x^N$ of length $N$, having all entries equal to $0$, except for the entry at position $x$, which is equal to $1$.\\
Notice that this multiplication gives exactly one nonzero contribution, corresponding to the term where the equality $\sum_{h=1}^{m-1}\left((i_h-1)\prod_{k=h+1}^mn_k\right)+i_m =x$ is satisfied. We have only one such term, because of the uniqueness of the representation of the integer $x$ with respect to the weight vector $[u_{m-1}, u_{m-2}, \ldots,u_0]$. Therefore, such a term corresponds to the indices
$$
i_1-1 = x_1; \ i_2-1=x_2; \ldots\ldots ; i_{m-1}-1=x_{m-1}; i_m = x_m.
$$
Consequently, if $\sigma^{-1}(r) = \ell_r \in \{1,\ldots, m\}$, then it must be $i_{\sigma^{-1}(r)} = i_{\ell_r} = x_{\ell_r}+1$, so that the matrix multiplication returns a column vector of length $N$, having all entries equal to $0$, except for the entry at position
$$
x_{\sigma^{-1}(1)} n_{\sigma^{-1}(2)} \cdots n_{\sigma^{-1}(m)} + x_{\sigma^{-1}(2)} n_{\sigma^{-1}(3)} \cdots n_{\sigma^{-1}(m)} +\cdots + x_{\sigma^{-1}(m-1)}n_{\sigma^{-1}(m)} + x_{\sigma^{-1}(m)}.
$$
This gives the assertion.
\end{proof}

In other words, in the new configuration of the $N$ elements under the action of $\widetilde{\sigma}$, we have that the integers from $0$ to $N-1$ are rearranged according with a lexicographic order indexed by the branch indices $(n_{\sigma^{-1}(1)},n_{\sigma^{-1}(2)},\ldots, n_{\sigma^{-1}(m)})$, so that now the $\sigma^{-1}(1)$-th coordinate is the most important, and the $\sigma^{-1}(m)$-th coordinate becomes the less important.

\begin{remark}\rm
The action of the permutations associated with our shuffling permutation matrices coincides with that of the permutations defined by Ronse in \cite{ronse}. Note that the choice $\sigma = (1\ 2\ \ldots \ m)$ returns the perfect shuffle on $N$ elements described in Proposition \ref{proposizioneemme} (see also Remark \ref{remarkdanielone} below). Moreover, the choice $\sigma =(1\ m\ m\!-\!1\ \ldots\ 2)$ returns the perfect shuffle studied by Davio in \cite{davio}. In this last case, in order to determine the image of an element $x=x_1x_2\ldots x_{m-1}x_m$ under the action of $\widetilde{\sigma}$, it suffices to perform a cyclic permutations of the coordinates, from the right to the left, by obtaining $\widetilde{\sigma}(x) = x_2x_3\ldots x_m x_1$ with respect to the new basis vector $[n_2,n_3\ldots, n_m,n_1]$ (see Theorem 4 in \cite{davio}).
\end{remark}

\begin{example}\label{altriptifissi}\rm
Consider the rooted tree $T_{2,2,3}$ of depth $3$ in Figure \ref{FigureExamplepippino} of Example \ref{esempionumerato}. The integers from $0$ to $11$ (in the second row) are listed in the corresponding lexicographic order (first row), that is, the mixed radix representation with respect to the weight vector $[6,3,1]$, associated with the basis vector $[2,2,3]$. The permutation $\widetilde{\sigma}$ rearranges the elements in the lexicographic order given by the mixed radix representation with respect to the weight vector $[4,2,1]$, associated with the basis vector $[3,2,2]$ (fourth row). In the third row, we have the horizontal representation of the permutation $\widetilde{\sigma}$. %\begin{table}\small
%\begin{tabular}{|c|c|c|c|c|c|c|c|c|c|c|c|c|c|c|c|c|c|c|c|c|c|c|c|}
%\hline
%$0$ & $1$ & $2$ & $3$ & $4$ & $5$ & $6$ & $7$ & $8$ & $9$ & $10$ & $11$ & $12$ & $13$ & $14$ & $15$ & $16$ & $17$ & $18$ & $19$ & $20$ & $21$ & $22$ & $23$  \\
%\hline
%$0$ & $0$ & $0$ & $0$ & $0$ & $0$ & $1$ & $1$ & $1$ & $1$ & $1$ & $1$ & $2$ & $2$ & $2$ & $2$ & $2$ & $2$ & $3$ & $3$ & $3$ & $3$ & $3$ & $3$  \\
%$0$ & $0$ & $1$ & $1$ & $2$ & $2$ & $0$ & $0$ & $1$ & $1$ & $2$ & $2$ & $0$ & $0$ & $1$ & $1$ & $2$ & $2$ & $0$ & $0$ & $1$ & $1$ & $2$ & $2$  \\
%$0$ & $1$ & $0$ & $1$ & $0$ & $1$ & $0$ & $1$ & $0$ & $1$ & $0$ & $1$ & $0$ & $1$ & $0$ & $1$ & $0$ & $1$ & $0$ & $1$ & $0$ & $1$ & $0$ & $1$  \\
% \hline
%$0$ & $0$ & $0$ & $0$ & $0$ & $0$ & $1$ & $1$ & $1$ & $1$ & $1$ & $1$ & $2$ & $2$ & $2$ & $2$ & $2$ & $2$ & $3$ & $3$ & $3$ & $3$ & $3$ & $3$  \\
%$0$ & $1$ & $2$ & $0$ & $1$ & $2$ & $0$ & $1$ & $2$ & $0$ & $1$ & $2$ & $0$ & $1$ & $2$ & $0$ & $1$ & $2$ & $0$ & $1$ & $2$ & $0$ & $1$ & $2$  \\
%$0$ & $0$ & $0$ & $1$ & $1$ & $1$ & $0$ & $0$ & $0$ & $1$ & $1$ & $1$ & $0$ & $0$ & $0$ & $1$ & $1$ & $1$ & $0$ & $0$ & $0$ & $1$ & $1$ & $1$  \\
%\hline
 %$0$ & $2$ & $4$ & $1$ & $3$ & $5$ & $6$ & $8$ & $10$ & $7$ & $9$ & $11$ & $12$ & $14$ & $16$ & $13$ & $15$ & $17$ & $18$ & $20$ & $22$ & $19$ & $21$ & $23$  \\
%\hline
%\end{tabular} \caption{The permutation induced by $\sigma=(23)$ on the rooted tree $T_{4,3,2}$.} \label{TablePermutation}
%\end{table}
\end{example}

\begin{remark}\label{remarkdanielone}\rm
In the tree representation that we adopt, the permutation on the $N$ elements of the $m$-th level of $T_{n_1, n_2, \ldots, n_m}$ induced by $\sigma=(1\ 2\ \ldots\ m)$ has the following geometrical interpretation. Consider the $N'=\prod_{i=1}^{m-1} n_i$ subtrees of depth $1$ and size $n_m$ constituting the last level of the tree, and rooted at the vertices of the $(m-1)$-st level of $T_{n_1, n_2, \ldots, n_m}$. If we identify such vertices with the set of words $\{x_1\cdots x_{m-1}:\ x_j\in X_j\}$, these subtrees can be naturally denoted by $T_{x_1\ldots x_{m-1}}$. Then the new order given by the permutation $\widetilde{\sigma}$ associated with the shuffling matrix $P^{(1\ 2\ \ldots\ m)}_{n_1,\ldots, n_m}$ is obtained by starting from $0^m$, which is the first vertex of $T_{0^{m-1}}$, corresponding to the integer $0$ in the lexicographic order, and then performing jumps of size $n_m$. In this way, we are consecutively reaching the first vertex of the second subtree $T_{0^{m-2}1}$, then the first vertex of the third subtree $T_{0^{m-2}2}$ and so on until we reach the first vertex of the last subtree $T_{n_{1}\!-\!1\ldots n_{m}\!-\!1}$. After that, we pass to the second vertex of the first subtree $T_{0^{m-1}}$, then to the second vertex of the second subtree $T_{0^{m-1}1}$ and so on by using the same method until we reach the last vertex of the last subtree. See, for instance, Figure \ref{FigureExample}, where we have $n_1=n_2=2$, $n_3=3$ and $\sigma=(1\ 2\ 3)$. This geometrical interpretation can be extended to the permutation associated with any $\sigma\in Sym(m)$, by considering subtrees rooted at higher levels.
\end{remark}

One can ask whether the action of the permutations of $Sym (N)$ induced by the permutations $\sigma \in Sym(m)$ have fixed points, that is, if there exists some integer $x\in \{0,1,\ldots, N-1\}$ satisfying $ \widetilde{\sigma}(x) = x$, for every $\sigma \in Sym(m)$.

\begin{prop}
Let $T_{n_1,\ldots, n_m}$ be the rooted tree with branch indices $(n_1,\ldots, n_m)$. Then, for every $\sigma\in Sym (m)$, the integers $\overline{x_0}=0$ and $\overline{x_{N-1}}=N-1$ are fixed points. Moreover, in the homogeneous case where $n_i= n$ for each $i=1,\ldots, m$, there are exactly $n$ points which are fixed by any permutation $\sigma\in Sym (m)$: they are given by the constant expansions $\overline{x_0} = \underbrace{0\ldots 0}_{m\ times}$, $\overline{x_1}= \underbrace{1\ldots 1}_{m\ times}$, $\ldots$, $\overline{x_{n-1}} = \underbrace{n\!-\!1\ldots n\!-\!1}_{m\ times}$.
\end{prop}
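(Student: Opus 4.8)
The plan is to deduce both assertions directly from the explicit formula for $\widetilde{\sigma}$ established in Proposition \ref{propositionaction}. Writing the mixed–radix digits of $x$ with respect to $[n_1,\dots,n_m]$ as $x_1,\dots,x_m$, that formula reads
$$
\widetilde{\sigma}(x)=\sum_{j=1}^m x_{\sigma^{-1}(j)}\,w_j,\qquad\text{where } w_j=\prod_{l=j+1}^m n_{\sigma^{-1}(l)}\ \ (w_m=1).
$$
Everything below is a specialization of this identity, so no independent machinery is needed.

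First I would dispatch the two universal fixed points. For $x=0$ all digits vanish, $x_1=\dots=x_m=0$, so every summand is zero and $\widetilde{\sigma}(0)=0$ for every $\sigma$. For $x=N-1$ I would first record that its digits are all maximal, $x_j=n_j-1$: this is the telescoping identity $(n_j-1)\prod_{l>j}n_l=\prod_{l\ge j}n_l-\prod_{l>j}n_l$, whose sum over $j$ collapses to $N-1$, so by uniqueness of the representation these maximal digits are the correct ones. Substituting $x_{\sigma^{-1}(j)}=n_{\sigma^{-1}(j)}-1$ into the formula and using $n_{\sigma^{-1}(j)}w_j=w_{j-1}$ (with $w_0=N$), I obtain
$$
\widetilde{\sigma}(N-1)=\sum_{j=1}^m\bigl(n_{\sigma^{-1}(j)}-1\bigr)w_j=\sum_{j=1}^m\bigl(w_{j-1}-w_j\bigr)=w_0-w_m=N-1,
$$
again a telescoping sum, valid for every $\sigma$.

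For the homogeneous case $n_i=n$, the weights simplify to $w_j=n^{m-j}$, independent of $\sigma$, so the formula becomes $\widetilde{\sigma}(x)=\sum_{j=1}^m x_{\sigma^{-1}(j)}\,n^{m-j}$; in other words $\widetilde{\sigma}$ merely permutes the base-$n$ digits of $x$, placing the old digit $x_{\sigma^{-1}(j)}$ into position $j$. Hence $\widetilde{\sigma}(x)=x$ holds precisely when $x_{\sigma^{-1}(j)}=x_j$ for all $j$, i.e. when the digit word is constant on the cycles of $\sigma$. To isolate the integers fixed by \emph{every} $\sigma\in Sym(m)$, I would note that since $m\ge 2$, for any two positions $a\ne b$ the transposition $(a\,b)$ forces $x_a=x_b$; imposing this for all pairs collapses the word to $x_1=\dots=x_m=c$ for a single $c\in\{0,\dots,n-1\}$. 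Conversely, any constant word is visibly invariant under every digit permutation. This produces exactly the $n$ constant expansions $\overline{x_c}=\underbrace{c\ldots c}_{m\ \mathrm{times}}$, $c=0,\dots,n-1$ (the integers $c\,(n^m-1)/(n-1)$), as claimed.

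I do not expect any genuine obstacle here: the statement is essentially a corollary of Proposition \ref{propositionaction}. The only steps requiring a little care are the two telescoping computations — one identifying the digits of $N-1$, the other evaluating $\widetilde{\sigma}(N-1)$ — and the remark that, to pin down the \emph{common} fixed points rather than those of a single $\sigma$, it suffices to impose invariance under transpositions, which is precisely what forces the digit word to be constant.
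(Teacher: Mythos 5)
Your proof is correct and takes essentially the same route as the paper: both arguments are direct specializations of the explicit formula of Proposition \ref{propositionaction} to $x=0$, to $x=N-1$, and to the homogeneous case where the weights $n^{m-j}$ become independent of $\sigma$ so that $\widetilde{\sigma}$ just permutes base-$n$ digits. Yours is in fact slightly more complete than the paper's: the paper merely asserts that the digits of $N-1$ are all maximal and that the constant expansions are fixed, whereas you carry out the two telescoping computations explicitly and, more importantly, you supply the converse needed for the word \emph{exactly} --- namely that any point fixed by every $\sigma$, in particular by all transpositions, must have constant digits --- a step the paper's proof leaves implicit.
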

\begin{proof}
Order the integers from $0$ to $N-1$ in the lexicographic order induced by the mixed radix representation with respect to the weight vector $[u_{m-1}, u_{m-2}, \ldots,  u_0]$. We know that the image of the vertex of the $m$-th level of $T_{n_1,\ldots, n_m}$, which is identified with the integer
$$
x = x_1n_2\cdots n_m + x_2 n_3\cdots n_m + \cdots + x_{m-1}n_m + x_m
$$
is
$$
\widetilde{\sigma}(x) = x_{\sigma^{-1}(1)} n_{\sigma^{-1}(2)} \cdots n_{\sigma^{-1}(m)} + x_{\sigma^{-1}(2)} n_{\sigma^{-1}(3)} \cdots n_{\sigma^{-1}(m)} +\cdots + x_{\sigma^{-1}(m-1)}n_{\sigma^{-1}(m)} + x_{\sigma^{-1}(m)}.
$$
In particular, for the integer $0$, we have $x_i = 0$ for each $i=1,\ldots,m$, so that $\widetilde{\sigma}(0)=0$; similarly, for the integer $N-1$, we have $x_i = n_i-1$ for each $i=1,\ldots,m$, so that $\widetilde{\sigma}(N-1)=N-1$.\\ In the homogeneous case, we have
$$
x = \sum_{i=1}^m x_i n^{m-i} \qquad \widetilde{\sigma}(x) = \sum_{i=1}^m x_{\sigma^{-1}(i)} n^{m-i}.
$$
Therefore, for every $h=0,\ldots, n-1$, the integer $x = h\sum_{i=0}^{m-1}n^i = h\frac{n^m-1}{n-1} = h\frac{N-1}{n-1}$ corresponding to the constant expansions $x_i = h$ for each $i=1,\ldots,m$, is a fixed point for every $\sigma\in Sym (m)$.
\end{proof}

\begin{remark}\rm
Observe that, in general, there may be nontrivial fixed points even in the non homogeneous case. For instance, if we choose $n_1=n_3=2, n_2=3$ and $\sigma =(1\ 3)\in Sym(3)$ as in Remark \ref{remarknuovissimo}, then the integers $2,4,7,9$ are fixed by the permutation $\widetilde{\sigma}$ associated with the matrix $P^{(13)}_{2,3,2}$.
%See, for instance, Example \ref{altriptifissi}, where the integers $5,6,11,12,17,18$ are fixed by the permutation $(23)$ acting on the rooted tree $T_{4,3,2}$.
\end{remark}

\subsection{The conjugacy property}\label{subsectionconjugacy}
In this section, we provide a generalization of Theorem 6 of \cite{davio} and Proposition 1 of \cite{rose}, showing that the shuffling matrices that we have defined in Section \ref{sectionmaindefinition} are able to rearrange the factors of an iterated Kronecker product of $m$ matrices, according with any new order induced by a permutation $\sigma\in Sym(m)$.

\begin{theorem}\label{theoremconiugio}
For each $i=0,1,\ldots, m-1$, let $A_i = (a_i(p,q))_{\substack{p = 0,\ldots,r_i-1 \\ q=0,\ldots, c_i-1}}\in \mathcal{M}_{r_i \times c_i}(\mathbb{R})$ and let $\sigma\in Sym(m)$. Put
$$
L = \sum_{\substack{i_j = 0,\ldots,r_j-1 \\ j=0,\ldots, m-1}} E^{i_{\sigma^{-1}(0)},i_0}_{r_{\sigma^{-1}(0)}\times r_0} \otimes  E^{i_{\sigma^{-1}(1)},i_1}_{r_{\sigma^{-1}(1)}\times r_1} \otimes \cdots \otimes E^{i_{\sigma^{-1}(m-1)},i_{m-1}}_{r_{\sigma^{-1}(m-1)}\times r_{m-1}}
$$
and
$$
R = \sum_{\substack{j_h = 0,\ldots,c_{h}-1 \\ h=0,\ldots, m-1}} E^{j_{0},j_{\sigma^{-1}(0)}}_{c_{0}\times c_{\sigma^{-1}(0)}} \otimes  E^{j_{1},j_{\sigma^{-1}(1)}}_{c_{1}\times c_{\sigma^{-1}(1)}} \otimes \cdots \otimes E^{j_{m-1},j_{\sigma^{-1}(m-1)}}_{c_{m-1}\times c_{\sigma^{-1}(m-1)}}.
$$
Then
\begin{eqnarray}\label{proofconiugio}
L\cdot (A_0\otimes A_1 \otimes \cdots \otimes A_{m-1})\cdot R = A_{\sigma^{-1}(0)}\otimes A_{\sigma^{-1}(1)} \otimes \cdots \otimes A_{\sigma^{-1}(m-1)}.
\end{eqnarray}
\end{theorem}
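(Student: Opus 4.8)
The plan is to expand every matrix appearing in \eqref{proofconiugio} into a sum of Kronecker products of elementary matrices, and then to collapse the triple product factor by factor, using the mixed-product rule together with Lemma~\ref{lemmaciccino}. First I would write each factor as $A_i = \sum_{p_i,q_i} a_i(p_i,q_i)\,E^{p_i,q_i}_{r_i\times c_i}$, so that by distributivity and associativity of the Kronecker product the central term becomes
$$
A_0\otimes\cdots\otimes A_{m-1}=\sum_{\{p_i,q_i\}}\Big(\prod_{i=0}^{m-1}a_i(p_i,q_i)\Big)\,E^{p_0,q_0}_{r_0\times c_0}\otimes\cdots\otimes E^{p_{m-1},q_{m-1}}_{r_{m-1}\times c_{m-1}}.
$$
The computational engine is the generalized mixed-product property $(B_0\otimes\cdots\otimes B_{m-1})(C_0\otimes\cdots\otimes C_{m-1})=(B_0C_0)\otimes\cdots\otimes(B_{m-1}C_{m-1})$, valid whenever the products $B_kC_k$ are defined; it follows from property (5) of the Kronecker product and associativity (1) by induction.

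Next I would multiply $L$ by each term of the above sum. The $k$-th factor contributes $E^{i_{\sigma^{-1}(k)},i_k}_{r_{\sigma^{-1}(k)}\times r_k}\cdot E^{p_k,q_k}_{r_k\times c_k}$, which by Lemma~\ref{lemmaciccino} vanishes unless $i_k=p_k$, in which case it equals $E^{i_{\sigma^{-1}(k)},q_k}_{r_{\sigma^{-1}(k)}\times c_k}$. Summing over all the $i_j$ therefore forces $i_j=p_j$ for every $j$, hence $i_{\sigma^{-1}(k)}=p_{\sigma^{-1}(k)}$, and one obtains
$$
L\cdot(A_0\otimes\cdots\otimes A_{m-1})=\sum_{\{p_i,q_i\}}\Big(\prod_{i}a_i(p_i,q_i)\Big)\,\bigotimes_{k=0}^{m-1}E^{p_{\sigma^{-1}(k)},q_k}_{r_{\sigma^{-1}(k)}\times c_k}.
$$
Multiplying on the right by $R$ and applying the same two lemmas to the $k$-th factor $E^{p_{\sigma^{-1}(k)},q_k}_{r_{\sigma^{-1}(k)}\times c_k}\cdot E^{j_k,j_{\sigma^{-1}(k)}}_{c_k\times c_{\sigma^{-1}(k)}}$ forces $j_k=q_k$, so that $j_{\sigma^{-1}(k)}=q_{\sigma^{-1}(k)}$ and
$$
L\cdot(A_0\otimes\cdots\otimes A_{m-1})\cdot R=\sum_{\{p_i,q_i\}}\Big(\prod_i a_i(p_i,q_i)\Big)\,\bigotimes_{k=0}^{m-1}E^{p_{\sigma^{-1}(k)},q_{\sigma^{-1}(k)}}_{r_{\sigma^{-1}(k)}\times c_{\sigma^{-1}(k)}}.
$$

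To finish, I would reindex the sum by setting $p'_k=p_{\sigma^{-1}(k)}$ and $q'_k=q_{\sigma^{-1}(k)}$. Since $\sigma$ is a bijection of $\{0,\ldots,m-1\}$, this is merely a relabelling of the summation variables, and the scalar weight is unchanged because $\prod_i a_i(p_i,q_i)=\prod_k a_{\sigma^{-1}(k)}(p'_k,q'_k)$. The right-hand side is then recognized, via the expansion used in the first step, as exactly $A_{\sigma^{-1}(0)}\otimes\cdots\otimes A_{\sigma^{-1}(m-1)}$, which is \eqref{proofconiugio}.

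I expect the main obstacle to be purely the index bookkeeping: keeping straight which summation index sits in which tensor slot, and checking that the constraints produced by the two successive applications of Lemma~\ref{lemmaciccino} compose so that the surviving row index $p_{\sigma^{-1}(k)}$ and column index $q_{\sigma^{-1}(k)}$ are permuted by one and the same $\sigma^{-1}$. Everything else is a mechanical consequence of distributivity and the mixed-product rule.
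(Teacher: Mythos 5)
Your proof is correct, but it takes a genuinely different route from the paper's. The paper argues entrywise: it fixes an entry $(p,q)$, writes $p$ and $q$ in mixed radix form with respect to the basis vectors $[r_{\sigma^{-1}(0)},\ldots,r_{\sigma^{-1}(m-1)}]$ and $[c_{\sigma^{-1}(0)},\ldots,c_{\sigma^{-1}(m-1)}]$, observes that the row $p$ of $L$ and the column $q$ of $R$ each contain exactly one nonzero entry, at explicitly computed positions $\widetilde{k}$ and $\widetilde{z}$, and then checks that the single surviving term $L(p,\widetilde{k})\,A(\widetilde{k},\widetilde{z})\,R(\widetilde{z},q)$ equals the entry $a_{\sigma^{-1}(0)}(p_0,q_0)\cdots a_{\sigma^{-1}(m-1)}(p_{m-1},q_{m-1})$ of the right-hand side. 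You instead work structurally: expand each $A_i$ in the basis of elementary matrices, collapse the triple product slot by slot via the generalized mixed-product rule together with Lemma \ref{lemmaciccino}, and finish by reindexing the sum through $\sigma$. Your method is essentially the technique the paper itself uses to prove Proposition \ref{prop:composition} (the composition law $P^{\tau}\cdot P^{\sigma}=P^{\tau\sigma}$ for shuffling matrices), here extended by bilinearity from permutation matrices to arbitrary rectangular matrices; it avoids all mixed radix bookkeeping and makes the role of $\sigma$ transparent at the level of tensor slots. What the paper's entrywise computation buys in exchange is the explicit index arithmetic -- the positions $\widetilde{k}$, $\widetilde{z}$ of the nonzero entries of $L$ and $R$ -- which is the same arithmetic that elsewhere (Proposition \ref{propositionaction}) identifies the induced permutation $\widetilde{\sigma}$ on mixed radix representations; your argument, by contrast, never needs to know where those entries sit. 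Both proofs are complete and rigorous, and all the ingredients you invoke (bilinearity, the mixed-product property (5), Lemma \ref{lemmaciccino}) are available in the paper.
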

\begin{proof}
We will prove our claim by explicitly computing the entry $(p,q)$ of the matrices in both members of \eqref{proofconiugio}. Let
\begin{eqnarray*}\label{expansionpp}
p &=& \sum_{\ell=0}^{m-2}p_{\ell}r_{\sigma^{-1}(\ell+1)}\cdots r_{\sigma^{-1}(m-1)} + p_{m-1}\\
&=& p_0 r_{\sigma^{-1}(1)}r_{\sigma^{-1}(2)}\cdots r_{\sigma^{-1}(m-1)} + \cdots +p_{m-3}r_{\sigma^{-1}(m-2)}r_{\sigma^{-1}(m-1)}+p_{m-2}r_{\sigma^{-1}(m-1)}+p_{m-1}
\end{eqnarray*}
and
\begin{eqnarray*}\label{expansionpp}
q &=& \sum_{\ell=0}^{m-2}q_{\ell}c_{\sigma^{-1}(\ell+1)}\cdots c_{\sigma^{-1}(m-1)} + q_{m-1}\\
&=& q_0 c_{\sigma^{-1}(1)}c_{\sigma^{-1}(2)}\cdots c_{\sigma^{-1}(m-1)} + \cdots +q_{m-3}c_{\sigma^{-1}(m-2)}c_{\sigma^{-1}(m-1)}+q_{m-2}c_{\sigma^{-1}(m-1)}+q_{m-1}
\end{eqnarray*}
be the mixed radix representation of the integers $p$ and $q$ with respect to the basis vectors $[r_{\sigma^{-1}(0)}, r_{\sigma^{-1}(1)}, \ldots, r_{\sigma^{-1}(m-1)}]$ and $[c_{\sigma^{-1}(0)}, c_{\sigma^{-1}(1)}, \ldots, c_{\sigma^{-1}(m-1)}]$, respectively.
Then it follows from the definition of Kronecker product that the entry $(p,q)$ of the matrix on the right-hand member of \eqref{proofconiugio} is equal to
\begin{eqnarray}\label{aula3}
a_{\sigma^{-1}(0)}(p_0,q_0)\cdot a_{\sigma^{-1}(1)}(p_1,q_1)\cdots a_{\sigma^{-1}(m-1)}(p_{m-1},q_{m-1}).
\end{eqnarray}
On the other hand, for the matrix on the left-hand member of \eqref{proofconiugio} the entry $(p,q)$ is given by
$$
\sum_{k=0}^{\prod_{i=0}^{m-1}r_i - 1} \sum_{z=0}^{\prod_{i=0}^{m-1}c_i - 1} L(p,k) A(k,z) R(z,q),
$$
by definition of standard matrix multiplication. Here, we put $A=A_0\otimes A_1\otimes \cdots \otimes A_{m-1}$. Notice that there exists a unique value of $k$ such that the entry $L(p,k)$ is nonzero. By definition of the matrix $L$, such a value $\widetilde{k}$ is equal to
$$
\widetilde{k} = p_{\sigma(0)}r_1\cdots r_{m-1} +  p_{\sigma(1)}r_2\cdots r_{m-1}+ \cdots + p_{\sigma(m-2)} r_{m-1} + p_{\sigma(m-1)}.
$$
Similarly, it follows from the definition of the matrix $R$ that the unique value of $z$ such that the entry $R(z,q)$ is nonzero  is given by
$$
\widetilde{z} = q_{\sigma(0)}c_1\cdots c_{m-1} +  q_{\sigma(1)}c_2\cdots c_{m-1}+ \cdots + q_{\sigma(m-2)} c_{m-1} + q_{\sigma(m-1)}.
$$
More specifically, we have $L(p,\widetilde{k}) = R(\widetilde{z},q) = 1$, so that the entry $(p,q)$ of the matrix on the left-hand member of \eqref{proofconiugio} reduces to
\begin{eqnarray}\label{aula32}
A(\widetilde{k},\widetilde{z}) = a_{0}(p_{\sigma(0)},q_{\sigma(0)})\cdot a_{1}(p_{\sigma(1)},q_{\sigma(1)})\cdots a_{m-1}(p_{\sigma(m-1)},q_{\sigma(m-1)}).
\end{eqnarray}
The claim follows, if we observe that the expressions \eqref{aula3} and \eqref{aula32} coincide.
\end{proof}

In the square case, we get the following conjugacy result.

\begin{theorem}\label{coro1coniugio}
For each $i=0,1,\ldots, m-1$, let $A_i \in \mathcal{M}_{n_i \times n_i}(\mathbb{R})$, and let $\sigma\in Sym(m)$. Put
$$
L = \sum_{\substack{i_j = 0,\ldots,n_j-1 \\ j=0,\ldots, m-1}} E^{i_{\sigma^{-1}(0)},i_0}_{n_{\sigma^{-1}(0)}\times n_0} \otimes  E^{i_{\sigma^{-1}(1)},i_1}_{n_{\sigma^{-1}(1)}\times n_1} \otimes \cdots \otimes E^{i_{\sigma^{-1}(m-1)},i_{m-1}}_{n_{\sigma^{-1}(m-1)}\times n_{m-1}}.
$$
Then
\begin{eqnarray*}
L\cdot (A_0\otimes A_1 \otimes \cdots \otimes A_{m-1}) \cdot L^{-1} = A_{\sigma^{-1}(0)}\otimes A_{\sigma^{-1}(1)} \otimes \cdots \otimes A_{\sigma^{-1}(m-1)}.
\end{eqnarray*}
\end{theorem}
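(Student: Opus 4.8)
The plan is to deduce this from Theorem \ref{theoremconiugio} by specializing to the square case $r_i = c_i = n_i$ and then showing that the right multiplier $R$ occurring there coincides with $L^{-1}$. Setting $r_i = c_i = n_i$ in Theorem \ref{theoremconiugio}, the identity \eqref{proofconiugio} already reads
$$
L \cdot (A_0 \otimes A_1 \otimes \cdots \otimes A_{m-1}) \cdot R = A_{\sigma^{-1}(0)} \otimes A_{\sigma^{-1}(1)} \otimes \cdots \otimes A_{\sigma^{-1}(m-1)},
$$
so the whole content of the statement reduces to the single claim $R = L^{-1}$.

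First I would observe that, in the square case, $L$ is exactly the shuffling matrix $P^{\sigma}_{n_0,\ldots,n_{m-1}}$ of Definition \ref{definitionmaindefinition} (up to the harmless shift from $1$-based to $0$-based indexing of the factors), and is therefore a permutation matrix. Consequently $L$ is orthogonal, so that $L^{-1} = L^T$, and it suffices to check the identity $R = L^T$.

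Next I would compute $L^T$ explicitly. Applying the transposition property $(A \otimes B)^T = A^T \otimes B^T$ of the Kronecker product factor by factor, together with the elementary identity $\left(E^{a,b}_{p\times q}\right)^T = E^{b,a}_{q\times p}$, one obtains
$$
L^T = \sum_{\substack{i_j = 0,\ldots,n_j-1 \\ j=0,\ldots, m-1}} E^{i_0, i_{\sigma^{-1}(0)}}_{n_0 \times n_{\sigma^{-1}(0)}} \otimes \cdots \otimes E^{i_{m-1}, i_{\sigma^{-1}(m-1)}}_{n_{m-1} \times n_{\sigma^{-1}(m-1)}}.
$$
Renaming the dummy summation indices $i_h \mapsto j_h$, this is literally the matrix $R$ of Theorem \ref{theoremconiugio} with $c_i = n_i$. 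Hence $R = L^T = L^{-1}$, and substituting into the displayed identity above yields the assertion.

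I do not expect a genuine obstacle here, as the argument is essentially bookkeeping. The only point that requires care is the index-matching in the last step: one must check that the transpose of the $h$-th Kronecker factor $E^{i_{\sigma^{-1}(h)}, i_h}_{n_{\sigma^{-1}(h)} \times n_h}$ of $L$ reproduces exactly the $h$-th factor $E^{j_h, j_{\sigma^{-1}(h)}}_{n_h \times n_{\sigma^{-1}(h)}}$ of $R$ after the relabeling, including the matching of the row and column dimensions. Should one prefer to avoid invoking that $L$ is a permutation matrix, one can instead verify $R = L^{-1}$ directly by expanding the product $L \cdot R$ via Lemma \ref{lemmaciccino} and Lemma \ref{secondlemma} and checking that it equals $I_N$; but the transposition argument is considerably shorter.
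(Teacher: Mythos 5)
Your proposal is correct and follows essentially the same route as the paper's own proof: specialize Theorem \ref{theoremconiugio} to the square case, note that $L$ is a permutation matrix so that $L^{-1}=L^T$, and identify $R$ with $L^T$ via the transposition property of the Kronecker product. The only difference is that you spell out the factor-by-factor transposition and index relabeling that the paper leaves implicit, which is a welcome clarification rather than a deviation.
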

\begin{proof}
It suffices to take into account that any permutation matrix $P$ is an orthogonal matrix, that is, its inverse coincides with its transpose. In formulae, we have $P^T = P^{-1}$. Now we can observe that, if each matrix $A_i$ is a square matrix, then the matrices $L$ and $R$ in Theorem \ref{theoremconiugio} satisfy the equality $R=L^T=L^{-1}$, because of the linearity of the transposition and of the properties of the Kronecker product. The claim follows.
\end{proof}

In the case $m=2$, when $\sigma$ is the nontrivial permutation of the group $Sym(2)$, we get the following results, recovering Theorem 6 in \cite{davio} and Proposition 1 in \cite{rose}.

\begin{corollary}
Let $A_0\in \mathcal{M}_{r_0\times c_0}(\mathbb{R})$ and $A_1\in \mathcal{M}_{r_1\times c_1}(\mathbb{R})$. Put
$$
L = \sum_{\substack{i_0 = 0,\ldots,r_0-1 \\ i_1=0,\ldots, r_1-1}} E^{i_1,i_0}_{r_1\times r_0} \otimes  E^{i_0,i_1}_{r_0\times r_1} \qquad R = \sum_{\substack{j_0 = 0,\ldots,c_0-1 \\ j_1=0,\ldots, c_1-1}} E^{j_{0},j_1}_{c_{0}\times c_1} \otimes  E^{j_{1},j_0}_{c_{1}\times c_0}.
$$
Then
$$
L\cdot (A_0\otimes A_1)\cdot R = A_1\otimes A_0.
$$
Moreover, if $A_0$ and $A_1$ are square matrices of size $n_0$ and $n_1$, respectively, we have
$$
C\cdot (A_0\otimes A_1)\cdot C^{-1} = A_1\otimes A_0,
$$
with
$$
C = \sum_{\substack{i_0 = 0,\ldots,n_0-1 \\ i_1=0,\ldots, n_1-1}} E^{i_1,i_0}_{n_1\times n_0} \otimes  E^{i_0,i_1}_{n_0\times n_1}.
$$
\end{corollary}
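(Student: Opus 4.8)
The plan is to observe that this corollary is nothing but the specialization of Theorem \ref{theoremconiugio} and Theorem \ref{coro1coniugio} to the case $m=2$, so that essentially no new work is required beyond checking that the indices match up correctly. First I would note that the only nontrivial permutation of $Sym(2)$, acting on the index set $\{0,1\}$, is the transposition $\sigma = (0\ 1)$, which is its own inverse; hence $\sigma^{-1}(0) = 1$ and $\sigma^{-1}(1) = 0$.

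Next I would substitute these values into the definitions of $L$ and $R$ furnished by Theorem \ref{theoremconiugio} with $m=2$. The general left factor reads $\sum E^{i_{\sigma^{-1}(0)},i_0}_{r_{\sigma^{-1}(0)}\times r_0} \otimes E^{i_{\sigma^{-1}(1)},i_1}_{r_{\sigma^{-1}(1)}\times r_1}$, which under this substitution becomes $\sum E^{i_1,i_0}_{r_1\times r_0}\otimes E^{i_0,i_1}_{r_0\times r_1}$, exactly the matrix $L$ in the statement; analogously, the general right factor $\sum E^{j_0,j_{\sigma^{-1}(0)}}_{c_0\times c_{\sigma^{-1}(0)}}\otimes E^{j_1,j_{\sigma^{-1}(1)}}_{c_1\times c_{\sigma^{-1}(1)}}$ collapses to $\sum E^{j_0,j_1}_{c_0\times c_1}\otimes E^{j_1,j_0}_{c_1\times c_0}$, which is the matrix $R$ in the statement. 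With these identifications in place, Theorem \ref{theoremconiugio} immediately yields $L\cdot(A_0\otimes A_1)\cdot R = A_{\sigma^{-1}(0)}\otimes A_{\sigma^{-1}(1)} = A_1\otimes A_0$, establishing the first assertion.

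For the square case I would invoke Theorem \ref{coro1coniugio} with $r_i = c_i = n_i$. Setting $C$ equal to the specialized $L$, namely $C = \sum E^{i_1,i_0}_{n_1\times n_0}\otimes E^{i_0,i_1}_{n_0\times n_1}$, the argument already used in the proof of Theorem \ref{coro1coniugio} shows that the matching $R$ equals $C^T = C^{-1}$, via the transposition property of the Kronecker product together with the orthogonality of permutation matrices. Substituting $R = C^{-1}$ into the first identity then gives $C\cdot(A_0\otimes A_1)\cdot C^{-1} = A_1\otimes A_0$, as claimed. The only point requiring any attention is the bookkeeping of the substitution $\sigma^{-1}(0)=1$, $\sigma^{-1}(1)=0$ in the index expressions; once this is verified there is no genuine obstacle, since all the computational content already resides in the two theorems being specialized.
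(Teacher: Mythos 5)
Your proposal is correct and matches the paper's approach exactly: the paper states this corollary without a separate proof, presenting it precisely as the specialization of Theorem \ref{theoremconiugio} and Theorem \ref{coro1coniugio} to $m=2$ with $\sigma$ the nontrivial transposition, which is what you carry out. Your index bookkeeping ($\sigma^{-1}(0)=1$, $\sigma^{-1}(1)=0$) and the identification $R=C^T=C^{-1}$ in the square case are exactly the details the paper leaves implicit.
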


\section{Groups generated by shuffling permutations}\label{sectiongruppi}

Perfect shuffles naturally appear  in group theory. Suppose we have a deck of $2n$ cards an we want to perfectly shuffle it: first divide the deck in half and then interleave the two parts. If we number the cards from $0$ to $2n-1$, we can choose that the first card on the top, after performing the shuffle, is $0$ or $n$. This gives rise to two different permutations of the set of the $2n$ cards. Diaconis,
Grahm and Kantor studied in \cite{diaconis} the structure of the subgroup of $Sym(2n)$ generated by the two perfect shuffles described above. They gave a complete classification of such groups
depending on $n$. Medvendoff and Morrison \cite{medvedoffmorrison} generalized the problem of determining the subgroup of the symmetric group generated by the shuffles in the case in which the deck of
cards is divided into several equal piles that can be permuted. Golomb \cite{cuttingshuffling} studied the subgroup $K$ of $Sym(2n)$ generated by the shuffle together with the permutation induced by the usual cut of a deck of $2n$ cards (i.e., placing the top card on the bottom), showing that $K=Sym(2n)$.

In what follows, we are interested in a similar problem (see also \cite{ronse} for an analogous investigation).
%We start with a tree $T_{n_1, n_2, \ldots, n_m}$ of depth $m$ and branch indices $(n_1,\ldots,n_m)$. For any permutation $\sigma\in Sym(m)$, we can generate a permutation $\widetilde{\sigma} \in Sym(N)$, where, as usual $\prod_{i=1}^m n_i = N$ represents the number of elements of the $m$-th level of the tree $T_{n_1, n_2, \ldots, n_m}$.
We consider the subgroup $K_{n_1, n_2, \ldots, n_m}$ of $Sym(N)$ generated by the permutations $\{\widetilde{\sigma}:\, \sigma\in Sym(m)\}$, in formulae:
$$
K_{n_1, n_2, \ldots, n_m}=\langle \widetilde{\sigma}:\, \sigma\in Sym(m) \rangle \leq Sym(N).
$$

Notice that $K_{n_1, n_2, \ldots, n_m}$ is always a proper subgroup of $Sym(N)$, since it fixes $0$ and $N-1$. This also implies:
$$
[Sym(N):K_{n_1, n_2, \ldots, n_m}]\geq N(N-1).
$$
Clearly, the permutations generating the subgroup $K_{n_1, n_2, \ldots, n_m}$ depend on the order of the branch indices $(n_1,\ldots,n_m)$, as shown in Remark \ref{remarknuovissimo}. In particular, for any reorder $(n_{i_1},\ldots,n_{i_m})$ of the indices $(n_1,\ldots,n_m)$, we get a subgroup of $Sym(N)$. Notice that any reorder can be described by using a suitable permutation of $Sym(m)$: i.e., there exists $\sigma\in Sym(m)$ such that
$$
(n_{i_1},\ldots,n_{i_m})=(n_{\sigma^{-1}(1)}, n_{\sigma^{-1}(2)}, \ldots, n_{\sigma^{-1}(m)}).
$$
There are two interesting cases to be treated: the first one in which the branch indices are equal (homogeneous case), the second in which not all the branch indices coincide and we let them to be reordered (non homogeneous case). Surprisingly, in the non homogeneous case it turns out that the generated subgroup depends only on the branch indices and is independent from their order. These results
enables us to give the description of the subgroup $K_{n,\ldots, n}$ in the homogeneous case.

The key ingredient of our claim is the following result about shuffling matrices, that can be seen as a generalization of Theorem 5 in \cite{davio}. \begin{prop}\label{prop:composition}
Let $T_{n_1, n_2, \ldots, n_m}$ be the tree with branch indices $(n_1, n_2, \ldots, n_m)$, and let $\sigma, \tau \in Sym (m)$. Then
$$
P^{\tau}_{n_{\sigma^{-1}(1)}, n_{\sigma^{-1}(2)}, \ldots, n_{\sigma^{-1}(m)}} \cdot P^{\sigma}_{n_1, n_2, \ldots, n_m}  = P^{\tau\sigma}_{n_1, n_2, \ldots, n_m}.
$$
\end{prop}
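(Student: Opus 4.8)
The plan is to compute the left-hand side directly from Definition \ref{definitionmaindefinition}, exploiting the fact that both factors are already written as sums of iterated Kronecker products of elementary matrices whose sizes turn out to be conformable for the mixed-product rule. Writing $m_k = n_{\sigma^{-1}(k)}$ for the reordered branch indices, the $k$-th Kronecker factor of $P^{\tau}_{n_{\sigma^{-1}(1)},\ldots,n_{\sigma^{-1}(m)}}$ is $E^{j_{\tau^{-1}(k)},j_k}_{m_{\tau^{-1}(k)}\times m_k}$, while the $k$-th factor of $P^{\sigma}_{n_1,\ldots,n_m}$ is $E^{i_{\sigma^{-1}(k)},i_k}_{n_{\sigma^{-1}(k)}\times n_k}$. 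The first thing I would record is that these two factors are conformable for ordinary multiplication, since the number of columns $m_k$ of the former equals the number of rows $n_{\sigma^{-1}(k)}$ of the latter, precisely because $m_k=n_{\sigma^{-1}(k)}$ by definition of the reordered indices.

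Next I would apply property (5) of the Kronecker product (the mixed-product rule) factorwise, turning the product of the two sums into a double sum, over the indices $i_k$ and $j_k$, of iterated Kronecker products whose $k$-th entry is the ordinary matrix product $E^{j_{\tau^{-1}(k)},j_k}_{m_{\tau^{-1}(k)}\times m_k}\cdot E^{i_{\sigma^{-1}(k)},i_k}_{n_{\sigma^{-1}(k)}\times n_k}$. By Lemma \ref{lemmaciccino}, this product vanishes unless the inner indices agree, that is, unless $j_k=i_{\sigma^{-1}(k)}$, in which case it equals $E^{j_{\tau^{-1}(k)},i_k}_{m_{\tau^{-1}(k)}\times n_k}$. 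Imposing the matching condition $j_k=i_{\sigma^{-1}(k)}$ for every $k$ removes the summation over the $j$'s and leaves a single sum over the $i$'s.

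The crux of the argument is then the index bookkeeping. Substituting $j_{\tau^{-1}(k)}=i_{\sigma^{-1}(\tau^{-1}(k))}$ and $m_{\tau^{-1}(k)}=n_{\sigma^{-1}(\tau^{-1}(k))}$, and using $\sigma^{-1}\circ\tau^{-1}=(\tau\sigma)^{-1}$, the surviving $k$-th factor becomes $E^{i_{(\tau\sigma)^{-1}(k)},i_k}_{n_{(\tau\sigma)^{-1}(k)}\times n_k}$. Hence the whole product equals $\sum_{i_k} E^{i_{(\tau\sigma)^{-1}(1)},i_1}_{n_{(\tau\sigma)^{-1}(1)}\times n_1}\otimes\cdots\otimes E^{i_{(\tau\sigma)^{-1}(m)},i_m}_{n_{(\tau\sigma)^{-1}(m)}\times n_m}$, which is exactly $P^{\tau\sigma}_{n_1,\ldots,n_m}$ by Definition \ref{definitionmaindefinition}. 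The one delicate point, and really the only obstacle, is this conformability/relabeling step: one must check both that the reordered indices $m_k=n_{\sigma^{-1}(k)}$ make the factors multipliable, and that composing the two row-relabelings in the correct order produces $\sigma^{-1}\circ\tau^{-1}=(\tau\sigma)^{-1}$, and not the reversed $(\sigma\tau)^{-1}$, so that the outcome matches the right-hand side.

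As an independent check, since a permutation matrix is determined by its action on the standard column vectors $\mathbf{e}_x^N$, I would also track an arbitrary integer $x$: by Proposition \ref{propositionaction}, $P^{\sigma}_{n_1,\ldots,n_m}$ sends the coordinate word $(x_1,\ldots,x_m)$ in the basis $(n_1,\ldots,n_m)$ to the word $(x_{\sigma^{-1}(1)},\ldots,x_{\sigma^{-1}(m)})$ in the basis $(n_{\sigma^{-1}(1)},\ldots,n_{\sigma^{-1}(m)})$; applying $P^{\tau}_{n_{\sigma^{-1}(1)},\ldots,n_{\sigma^{-1}(m)}}$ and composing the coordinate permutations once more yields the relabeling $(\tau\sigma)^{-1}$, which coincides with the action of $\widetilde{\tau\sigma}$. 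This is a convenient consistency verification, though it reduces to the same index identity as the matrix computation above.
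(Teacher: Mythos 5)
Your proof is correct and follows essentially the same route as the paper's: expand both factors via Definition \ref{definitionmaindefinition}, multiply factorwise by the mixed-product rule, collapse the double sum over the $j$'s with Lemma \ref{lemmaciccino}, and relabel indices to recognize $P^{\tau\sigma}_{n_1,\ldots,n_m}$. Your explicit care with the composition order, checking that the surviving row index is $\sigma^{-1}(\tau^{-1}(k))=(\tau\sigma)^{-1}(k)$ rather than $(\sigma\tau)^{-1}(k)$, is exactly the delicate bookkeeping the paper's proof performs (its intermediate notation $\tau^{-1}\sigma^{-1}(k)$ must be read as applying $\tau^{-1}$ first), and your closing verification via Proposition \ref{propositionaction} is a harmless extra.
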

\begin{proof}
We have
\begin{eqnarray*}
P^{\tau}_{n_{\sigma^{-1}(1)}, n_{\sigma^{-1}(2)}, \ldots, n_{\sigma^{-1}(m)}}\! \cdot\! P^{\sigma}_{n_1, n_2, \ldots, n_m}\!\!  &\!=\!&\! \!\sum_{\substack{i_h = 1,\ldots,n_h \\ h=1,\ldots, m}} E^{i_{\tau^{-1}\sigma^{-1}(1)},i_{\sigma^{-1}(1)}}_{n_{\tau^{-1}\sigma^{-1}(1)}\times n_{\sigma^{-1}(1)}} \otimes \cdots \otimes E^{i_{\tau^{-1}\sigma^{-1}(m)},i_{\sigma^{-1}(m)}}_{n_{\tau^{-1}{\sigma^{-1}(m)}} \times n_{\sigma^{-1}(m)}}\\
\!\!&\!\cdot\!&\!\! \sum_{\substack{i_j = 1,\ldots,n_j \\ j=1,\ldots, m}} E^{i_{\sigma^{-1}(1)},i_1}_{n_{\sigma^{-1}(1)}\times n_1} \otimes \cdots \otimes E^{i_{\sigma^{-1}(m)},i_m}_{n_{\sigma^{-1}(m)}\times n_m}\cdot\\
\!\!&\!=\!&\!\! \sum_{\substack{i_j = 1,\ldots,n_j \\ j=1,\ldots, m}} E^{i_{\tau^{-1}\sigma^{-1}(1)},i_1}_{n_{\tau^{-1}\sigma^{-1}(1)}\times n_1} \otimes \cdots \otimes E^{i_{\tau^{-1}\sigma^{-1}(m)},i_{m}}_{n_{\tau^{-1}{\sigma^{-1}(m)}}\times n_{m}}\\
\!\!&\!=\!&\!\! P^{\tau\sigma}_{n_1, n_2, \ldots, n_m}
\end{eqnarray*}
according with Lemma \ref{lemmaciccino}. The claim follows.
\end{proof}

The previous proposition asserts that, if we reorder the branch indices in the form $(n_{\sigma^{-1}(1)}, n_{\sigma^{-1}(2)}, \ldots, n_{\sigma^{-1}(m)})$, then any permutation $\tau\in Sym(m)$ induces a permutation of the $m$-th level of $T_{n_{\sigma^{-1}(1)}, n_{\sigma^{-1}(2)}, \ldots, n_{\sigma^{-1}(m)}}$ that can be expressed in terms of permutations of the $m$-th level of $T_{n_1, n_2, \ldots, n_m}$. From this analysis we deduce the following corollary.

\begin{corollary}
Let $T_{n_1, n_2, \ldots, n_m}$ be the tree with branch indices $(n_1, n_2, \ldots, n_m)$, and let $\sigma \in Sym (m)$. Then
$$
(P^{\sigma}_{n_1, n_2, \ldots, n_m})^{-1} = (P^{\sigma}_{n_1, n_2, \ldots, n_m})^T = P^{\sigma^{-1}}_{n_{\sigma^{-1}(1)}, n_{\sigma^{-1}(2)}, \ldots, n_{\sigma^{-1}(m)}}.
$$
\end{corollary}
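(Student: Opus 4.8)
The plan is to deduce everything from Proposition \ref{prop:composition} together with the orthogonality of permutation matrices. The first equality $(P^{\sigma}_{n_1, \ldots, n_m})^{-1} = (P^{\sigma}_{n_1, \ldots, n_m})^T$ requires no work: a shuffling matrix is a permutation matrix, hence orthogonal, so its inverse and its transpose coincide (this is exactly the fact already invoked in the proof of Theorem \ref{coro1coniugio}).

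For the second, more substantial equality, I would first record that the \emph{identity} permutation $\mathrm{id}\in Sym(m)$ produces the identity matrix. Indeed, since $\mathrm{id}^{-1}(j)=j$, Definition \ref{definitionmaindefinition} gives
$$
P^{\mathrm{id}}_{n_1, \ldots, n_m} = \sum_{\substack{i_j = 1,\ldots,n_j \\ j=1,\ldots, m}} E^{i_1,i_1}_{n_1\times n_1} \otimes \cdots \otimes E^{i_m,i_m}_{n_m\times n_m}.
$$
By the multilinearity of the Kronecker product (distributivity, property (2)), the sum over the $m$ independent indices factorizes, and since $\sum_{i=1}^{n}E^{i,i}_{n\times n}=I_n$ for each factor, we obtain
$$
P^{\mathrm{id}}_{n_1, \ldots, n_m} = \left(\sum_{i_1}E^{i_1,i_1}_{n_1\times n_1}\right)\otimes \cdots \otimes \left(\sum_{i_m}E^{i_m,i_m}_{n_m\times n_m}\right) = I_{n_1}\otimes\cdots\otimes I_{n_m} = I_N.
$$

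With this in hand, I would apply Proposition \ref{prop:composition} with $\tau=\sigma^{-1}$. The reordered branch indices appearing in that statement are precisely $(n_{\sigma^{-1}(1)}, \ldots, n_{\sigma^{-1}(m)})$, so the proposition yields
$$
P^{\sigma^{-1}}_{n_{\sigma^{-1}(1)}, \ldots, n_{\sigma^{-1}(m)}} \cdot P^{\sigma}_{n_1, \ldots, n_m} = P^{\sigma^{-1}\sigma}_{n_1, \ldots, n_m} = P^{\mathrm{id}}_{n_1, \ldots, n_m} = I_N.
$$
Thus $P^{\sigma^{-1}}_{n_{\sigma^{-1}(1)}, \ldots, n_{\sigma^{-1}(m)}}$ is a left inverse of $P^{\sigma}_{n_1, \ldots, n_m}$; since both are invertible permutation matrices, a one-sided inverse is automatically the two-sided inverse, and the claimed identity $(P^{\sigma}_{n_1, \ldots, n_m})^{-1} = P^{\sigma^{-1}}_{n_{\sigma^{-1}(1)}, \ldots, n_{\sigma^{-1}(m)}}$ follows.

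I do not anticipate a genuine obstacle here: the entire argument is a specialization of the already-proved composition formula. The only point demanding care is the bookkeeping in the identity $P^{\mathrm{id}}=I_N$ and the verification that the index string $(n_{\sigma^{-1}(1)}, \ldots, n_{\sigma^{-1}(m)})$ occurring as the first factor in Proposition \ref{prop:composition} matches exactly the one in the corollary's statement. An alternative, purely computational route\,---\,transposing each elementary factor via property (3) and relabelling the resulting sum\,---\,also works, but it is messier, and I would avoid it in favour of the composition argument.
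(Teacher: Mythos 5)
Your proof is correct and follows exactly the route the paper intends: the corollary is stated as an immediate consequence of Proposition \ref{prop:composition}, obtained by taking $\tau=\sigma^{-1}$ and using orthogonality of permutation matrices. Your explicit verification that $P^{\mathrm{id}}_{n_1,\ldots,n_m}=I_N$ simply fills in a detail the paper leaves implicit, so there is nothing to correct.
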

The following theorem holds.

\begin{theorem}\label{teo:gruppo non omogeneo}
For any $\sigma\in Sym(m)$ the subgroups $K_{n_1, n_2, \ldots, n_m}$ and $K_{n_{\sigma^{-1}(1)}, n_{\sigma^{-1}(2)}, \ldots, n_{\sigma^{-1}(m)}}$ of $Sym(N)$ coincide.
\end{theorem}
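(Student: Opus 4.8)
The plan is to prove the two inclusions $K_{n_{\sigma^{-1}(1)}, \ldots, n_{\sigma^{-1}(m)}} \subseteq K_{n_1, \ldots, n_m}$ and its reverse, and deduce equality. First note that reordering the branch indices leaves the product $N = \prod_{i=1}^m n_i$ unchanged, so both subgroups genuinely sit inside the same symmetric group $Sym(N)$ and the comparison is meaningful. The entire argument rests on Proposition \ref{prop:composition} together with its Corollary, which already express products and inverses of shuffling matrices with reordered indices back in terms of the matrices $P^{\cdot}_{n_1, \ldots, n_m}$; the rest is just the closure of a subgroup under products and inverses.

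For the first inclusion, I would exhibit every generator of $K_{n_{\sigma^{-1}(1)}, \ldots, n_{\sigma^{-1}(m)}}$, namely $P^{\tau}_{n_{\sigma^{-1}(1)}, \ldots, n_{\sigma^{-1}(m)}}$ for $\tau \in Sym(m)$, as an element of $K_{n_1, \ldots, n_m}$. Rearranging the identity of Proposition \ref{prop:composition} gives
$$
P^{\tau}_{n_{\sigma^{-1}(1)}, \ldots, n_{\sigma^{-1}(m)}} = P^{\tau\sigma}_{n_1, \ldots, n_m}\cdot\left(P^{\sigma}_{n_1, \ldots, n_m}\right)^{-1}.
$$
Here the first factor is a generator of $K_{n_1, \ldots, n_m}$ and the second is the inverse of a generator, so the product belongs to $K_{n_1, \ldots, n_m}$. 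Letting $\tau$ range over $Sym(m)$ places every generator of $K_{n_{\sigma^{-1}(1)}, \ldots, n_{\sigma^{-1}(m)}}$ inside $K_{n_1, \ldots, n_m}$, which yields the inclusion $K_{n_{\sigma^{-1}(1)}, \ldots, n_{\sigma^{-1}(m)}} \subseteq K_{n_1, \ldots, n_m}$.

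For the reverse inclusion I would invoke symmetry. Writing $m_i = n_{\sigma^{-1}(i)}$, a direct check gives $m_{\sigma(j)} = n_{\sigma^{-1}(\sigma(j))} = n_j$, so the original tuple $(n_1, \ldots, n_m)$ is precisely the reordering of $(m_1, \ldots, m_m)$ by the permutation $\rho = \sigma^{-1}$, that is $(n_1, \ldots, n_m) = (m_{\rho^{-1}(1)}, \ldots, m_{\rho^{-1}(m)})$. Applying the inclusion already proved, now with base tuple $(m_1, \ldots, m_m)$ and reordering permutation $\rho = \sigma^{-1}$, gives $K_{m_{\rho^{-1}(1)}, \ldots, m_{\rho^{-1}(m)}} \subseteq K_{m_1, \ldots, m_m}$, i.e. $K_{n_1, \ldots, n_m} \subseteq K_{n_{\sigma^{-1}(1)}, \ldots, n_{\sigma^{-1}(m)}}$. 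Combining the two inclusions yields the asserted equality. The argument is essentially bookkeeping, and the only point demanding care, if any, is to set up the symmetric reversal correctly, that is to recognize $(n_1, \ldots, n_m)$ as a reordering of the permuted tuple via the inverse permutation $\sigma^{-1}$ so that Proposition \ref{prop:composition} applies verbatim in the opposite direction.
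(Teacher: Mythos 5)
Your proof is correct and follows essentially the same route as the paper: both hinge on rearranging Proposition \ref{prop:composition} into $P^{\tau}_{n_{\sigma^{-1}(1)}, \ldots, n_{\sigma^{-1}(m)}} = P^{\tau\sigma}_{n_1, \ldots, n_m}\cdot\left(P^{\sigma}_{n_1, \ldots, n_m}\right)^{-1}$ to place every generator of the reordered group inside $K_{n_1,\ldots,n_m}$. The only difference is that you spell out the reverse inclusion via the substitution $m_i = n_{\sigma^{-1}(i)}$ and $\rho = \sigma^{-1}$, whereas the paper compresses that step into the phrase ``since $\sigma$ is arbitrary''; your explicit bookkeeping is a welcome clarification, not a departure.
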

\begin{proof}
Let $\sigma\in Sym(m)$ be the permutation inducing the reorder of the branch indices $(n_{\sigma^{-1}(1)}, n_{\sigma^{-1}(2)}, \ldots, n_{\sigma^{-1}(m)})$. We have to prove that, given any $\widetilde{\tau}\in K_{n_{\sigma^{-1}(1)}, n_{\sigma^{-1}(2)}, \ldots, n_{\sigma^{-1}(m)}}$, which is the permutation of the vertices of the $m$-th level of $T_{n_{\sigma^{-1}(1)}, n_{\sigma^{-1}(2)}, \ldots, n_{\sigma^{-1}(m)}}$ induced by $\tau \in Sym(m)$, one has $\widetilde{\tau}\in K_{n_1, n_2, \ldots, n_m}$. This shows that all the generators of $K_{n_{\sigma^{-1}(1)}, n_{\sigma^{-1}(2)}, \ldots, n_{\sigma^{-1}(m)}}$ are contained in $K_{n_1, n_2, \ldots, n_m}$, and, since $\sigma\in Sym(m)$ is arbitrary, the groups $K_{n_{\sigma^{-1}(1)}, n_{\sigma^{-1}(2)}, \ldots, n_{\sigma^{-1}(m)}}$ coincide for all $\sigma\in Sym(m)$.

The action of $\widetilde{\tau}$ can be represented by the matrix $P^{\tau}_{n_{\sigma^{-1}(1)}, n_{\sigma^{-1}(2)}, \ldots, n_{\sigma^{-1}(m)}}$. By Proposition \ref{prop:composition}, one has
\begin{eqnarray}\label{preamboliomo}
P^{\tau}_{n_{\sigma^{-1}(1)}, n_{\sigma^{-1}(2)}, \ldots, n_{\sigma^{-1}(m)}} = P^{\tau\sigma}_{n_1, n_2, \ldots, n_m}\cdot (P^{\sigma}_{n_1, n_2, \ldots, n_m})^{-1}.
\end{eqnarray}
The matrices on the right-hand side of Equation \eqref{preamboliomo} correspond to the permutations $\widetilde{\tau\sigma}, \widetilde{\sigma}^{-1}$ of $K_{n_1, n_2, \ldots, n_m}$ associated with $\tau\sigma, \sigma^{-1}\in Sym(m)$. In particular, $\widetilde{\tau}\in K_{n_1, n_2, \ldots, n_m}$. The proof follows from the arbitrariness of $\sigma$ and $\tau$.
\end{proof}

\begin{remark}\label{remark:gruppi}\rm
It is worth mentioning here that, in general, the map associating $\widetilde{\sigma}$ to $\sigma$ is not an homomorphism from $Sym(m)$ to $K_{n_1, n_2, \ldots, n_m}$. This can be explicitly noticed in Example \ref{esempio finale} (for instance, not all permutations induced by transpositions are of order two). As Corollary \ref{corollario:gruppo omogeneo} shows, the non homogeneity of the tree $T_{n_{\sigma^{-1}(1)}, n_{\sigma^{-1}(2)}, \ldots, n_{\sigma^{-1}(m)}}$ produces an obstruction to that. This can be interpreted as follows: from Proposition \ref{prop:composition}, by applying a permutation $\sigma$, we pass from the lexicographic order of the $m$-th level of the tree $T_{n_1, n_2, \ldots, n_m}$ to a new order of the elements of the $m$-th level that must be considered in the tree $T_{n_{\sigma^{-1}(1)}, n_{\sigma^{-1}(2)}, \ldots, n_{\sigma^{-1}(m)}}$. This implies that the homomorphism does not hold because the structure on which we act is changed.
\end{remark}
\begin{example}\label{esempio finale}\rm
Let $m=3$ and $n_1=n_2=2$ and $n_3=3$, so that, we have $N=12$. Then one can check that:
\begin{itemize}
 \item $\widetilde{(1\ 2)}=(0)(1)(2)(3\ 6)(4\ 7)(5\ 8)(9)(10)(11)$;\\
 \item $\widetilde{(1\ 3)}=(0)(1\ 4\ 6)(2\ 8\ 9\ 3)(5\ 10\ 7)(11)$;\\
 \item $\widetilde{(2\ 3)}=(0)(1\ 2\ 4\ 3)(5)(6)(7\ 8\ 10\ 9)(11)$;\\
 \item $\widetilde{(1\ 2\ 3)}=(0)(1\ 4\ 5\ 9\ 3)(2\ 8\ 10\ 7\ 6)(11)$;\\
 \item $\widetilde{(1\ 3\ 2)}=(0)(1\ 2\ 4\ 8\ 5\ 10\ 9\ 7\ 3\ 6)(11)$.
\end{itemize}
In this case, by using GAP we found that $K_{2,2,3}\simeq C_2 \times (((C_2 \times C_2 \times C_2 \times C_2) \rtimes A_5) \rtimes C_2)$ is a subgroup of order 3840 in $Sym(12)$.
\end{example}

On the other hand, we get a precise description in the homogeneous case.

\begin{corollary}\label{corollario:gruppo omogeneo}
The group $K_{n,\ldots, n}$ is isomorphic to $Sym(m)$.
\end{corollary}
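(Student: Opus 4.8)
The plan is to exhibit an explicit isomorphism from $Sym(m)$ onto $K_{n,\ldots,n}$ by showing that, in the homogeneous case, the assignment $\sigma\mapsto\widetilde{\sigma}$ is a group homomorphism whose kernel is trivial. The crucial simplification in the homogeneous case is that all branch indices coincide, so the reorder $(n_{\sigma^{-1}(1)},\ldots,n_{\sigma^{-1}(m)})$ equals $(n,\ldots,n)$ for \emph{every} $\sigma\in Sym(m)$; in other words, the tree does not change under permutation of the coordinates. This removes precisely the obstruction identified in Remark \ref{remark:gruppi}.

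First I would invoke Proposition \ref{prop:composition} with $n_1=\cdots=n_m=n$. Since all indices are equal, the left-hand factor $P^{\tau}_{n_{\sigma^{-1}(1)},\ldots,n_{\sigma^{-1}(m)}}$ is simply $P^{\tau}_{n,\ldots,n}$, and the proposition reads
$$
P^{\tau}_{n,\ldots,n}\cdot P^{\sigma}_{n,\ldots,n} = P^{\tau\sigma}_{n,\ldots,n}.
$$
Passing to the associated permutations of $Sym(N)$, this says $\widetilde{\tau}\,\widetilde{\sigma}=\widetilde{\tau\sigma}$, so the map $\Phi\colon Sym(m)\to K_{n,\ldots,n}$ defined by $\Phi(\sigma)=\widetilde{\sigma}$ is a group homomorphism. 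By definition of $K_{n,\ldots,n}$ as the subgroup generated by the $\widetilde{\sigma}$, the map $\Phi$ is surjective onto $K_{n,\ldots,n}$.

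It remains to prove injectivity, i.e.\ that $\widetilde{\sigma}=\mathrm{id}$ forces $\sigma=\mathrm{id}$ in $Sym(m)$. For this I would use the explicit action computed in Proposition \ref{propositionaction}, which in the homogeneous case gives $\widetilde{\sigma}(x)=\sum_{i=1}^m x_{\sigma^{-1}(i)}n^{m-i}$ for $x=\sum_{i=1}^m x_i n^{m-i}$. If $\widetilde{\sigma}$ is the identity on $\{0,\ldots,N-1\}$, I would test it on suitable basis-like integers: for each pair of positions, choosing a word $x$ whose coordinates are all distinct (which is possible precisely because $n\geq 2$ and one has enough freedom; concretely one can take a word realizing any prescribed permutation pattern of the coordinates) shows that $x_{\sigma^{-1}(i)}=x_i$ for all $i$ forces $\sigma^{-1}(i)=i$. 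A clean way is to pick, for a fixed index $k$, the integer whose expansion has a single nonzero coordinate in position $k$; the condition $\widetilde{\sigma}(x)=x$ then pins down $\sigma^{-1}(k)=k$ by comparing the unique nonzero digit and the power of $n$ carrying it. Hence $\sigma=\mathrm{id}$, the kernel of $\Phi$ is trivial, and $\Phi$ is an isomorphism.

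The main obstacle I anticipate is the injectivity argument: one must choose the test integers carefully so that distinctness of digits genuinely separates the coordinates and the conclusion $\sigma^{-1}(i)=i$ is forced rather than merely consistent. The homomorphism property, by contrast, is essentially immediate from Proposition \ref{prop:composition} once one notes that homogeneity collapses the reordered tree back onto $T_{n,\ldots,n}$, so the bulk of the genuine work lies in verifying that $\Phi$ has no kernel.
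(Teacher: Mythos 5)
Your proposal is correct and takes essentially the same route as the paper: Proposition \ref{prop:composition}, specialized to the homogeneous case where the reordered tuple collapses back to $(n,\ldots,n)$, gives the homomorphism $\sigma\mapsto\widetilde{\sigma}$, whose image is $K_{n,\ldots,n}$ since that group is by definition generated by the $\widetilde{\sigma}$. The only difference is one of detail: the paper dismisses injectivity with the remark that a nontrivial $\sigma$ clearly yields a non-identity matrix, whereas you verify the trivial kernel explicitly via Proposition \ref{propositionaction} using words with a single nonzero digit (which indeed forces $\sigma(k)=k$ for every $k$), a sound elaboration of the same step.
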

\begin{proof}
Let $\phi:Sym(m)\to \mathcal{M}_{N\times N}(\mathbb{R})$ be the map defined by $\phi(\sigma)=P^{\sigma}_{n, \ldots, n}$. Clearly if $\sigma$ is not trivial it induces a permutation matrix $P^{\sigma}_{n, \ldots, n}$ that is not the identity matrix. Proposition \ref{prop:composition} shows that this map is a homomorphism. This implies that $\phi$ is an isomorphism.
The claim follows by observing that the group $\phi(Sym(m))$ is isomorphic to $K_{n,\ldots, n}$.
\end{proof}

\begin{remark}\rm
In the homogeneous case, one can give an easy interpretation of the group $K_{n,\ldots, n}\simeq Sym(m)$. Given a permutation $\sigma\in Sym(m)$, one can easily deduce from the mixed radix representation that the corresponding permutation $\widetilde{\sigma}\in Sym(N)$ acts on the set of words of length $m$ over the alphabet $\{0,1,\ldots, n-1\}$ by permuting their coordinates. More precisely
$$
\widetilde{\sigma}(x_1\ldots x_m)=x_{\sigma^{-1}(1)}\ldots x_{\sigma^{-1}(m)}.
$$
From this fact, it also follows an alternative proof of Corollary \ref{corollario:gruppo omogeneo}. Notice that such an interpretation does not hold in the general case: if we exchange $x_i$ and $x_j$ by the transposition $(i\ j)$ it may happen that $n_i\neq n_j$ and so the word $x_1\ldots x_{i-1}x_jx_{i+1}\ldots x_{j-1}x_ix_{j+1}\ldots x_{m}$ represents no element of the $m$-th level of $T_{n_1, n_2, \ldots, n_m}$.
\end{remark}

\subsection{The case of perfect shuffles}\label{more}
There is another way to describe shuffles that can be considered. Let $N=\prod_{i=1}^m n_i$ and let $\mathbb{Z}_{N-1}$ be the ring of integers modulo $N-1$. Notice that if $N=N'\times n_m$, with $N'=\prod_{i=1}^{m-1} n_i$ the perfect shuffle on $N$ can be obtained by reordering the $N$ elements $\{0,1,\ldots, N-1\}$ in such a way that $0$ is the first, and then we perform the sum with $n_m$ modulo $(N-1)$ (see Proposition \ref{proposizioneemme} and Remark \ref{remarkdanielone}). This means that the new order induced by the shuffle corresponds to the horizontal representation
$$
0,n_m \textrm{ mod } N-1, 2 n_m \textrm{ mod } N-1, \ldots\ldots, N-1.
$$
We denote by $Sh_{n_m}$ such operation. In the setting described in the previous sections, this corresponds to act by the permutation $\widetilde{\sigma}$ induced by $\sigma=(1\ 2\ \ldots\ m)\in Sym(m)$ on the rooted tree $T_{n_1,n_2,\ldots, n_m}$, so that the new order can be regarded as the horizontal representation of the permutation $\widetilde{\sigma}$ (see Proposition \ref{proposizioneemme} and Example \ref{esempioemme}).

For any $k=1,\ldots, N-2$, we can more generally define the operation $Sh_{k}$ in an analogous way, just by performing the sum with $k$ modulo $N-1$. Notice that $Sh_1$ is the identity permutation on the set $\{0,1,\ldots, N-1\}$. We want to study the cases in which the reordering of the $N$ elements gives rise to a permutation of them, which fixes the first and the last elements ($0$ and $N-1$). Notice that this happens if and only if starting from $0$ and considering its orbit under $Sh_k$ we do find $N-1$ distinct elements. The following simple result clarifies when this operation gives rise to a permutation.

\begin{lemma}
The orbit of $0$ under the action of $Sh_k$ contains $N-1$ elements if and only if $(k,N-1)=1$.
\end{lemma}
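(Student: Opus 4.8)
The plan is to recognize $Sh_k$, restricted to the set $\{0,1,\ldots,N-2\}$, as nothing more than translation by $k$ in the additive cyclic group $\mathbb{Z}_{N-1}$, while the element $N-1$ is a fixed point lying outside this orbit. First I would observe that, by the very construction of $Sh_k$, the element $N-1$ is fixed, so that the orbit of $0$ can never reach $N-1$ and is entirely contained in $\{0,1,\ldots,N-2\}$. This already yields the a priori bound that the orbit has at most $N-1$ elements, and reduces the claim to deciding exactly when the orbit exhausts $\{0,1,\ldots,N-2\}$.

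Next I would identify $\{0,1,\ldots,N-2\}$ with the underlying set of $\mathbb{Z}_{N-1}$ via the obvious bijection, under which $Sh_k$ becomes the map $x\mapsto x+k \pmod{N-1}$. Under this identification the successive iterates of $0$ are precisely $0,\,k,\,2k,\,\ldots \pmod{N-1}$, so the orbit of $0$ coincides with the cyclic subgroup $\langle k\rangle$ of the additive group $(\mathbb{Z}_{N-1},+)$ generated by $k$.

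The key step is then the standard computation of the order of an element in a cyclic group: the subgroup $\langle k\rangle$ of $\mathbb{Z}_{N-1}$ has cardinality $\frac{N-1}{(k,N-1)}$. Consequently the orbit of $0$ contains exactly $N-1$ elements if and only if $\frac{N-1}{(k,N-1)}=N-1$, that is, if and only if $(k,N-1)=1$, which is the asserted equivalence.

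I do not expect a genuine obstacle here, since the argument is elementary once the translation-in-$\mathbb{Z}_{N-1}$ viewpoint is adopted; the only point demanding a little care is the bookkeeping with the fixed point $N-1$, namely ensuring that it is correctly excluded from the orbit so that ``the orbit contains $N-1$ elements'' translates into ``$\langle k\rangle$ exhausts all of $\mathbb{Z}_{N-1}$'' rather than into a count that accidentally incorporates the fixed point.
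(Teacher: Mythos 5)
Your proof is correct and takes essentially the same route as the paper's own (one-line) argument, which simply notes that the orbit of $0$ under $Sh_k$ has $N-1$ elements if and only if $k$ generates the additive group $\mathbb{Z}_{N-1}$. You merely make explicit the details the paper leaves implicit: the exclusion of the fixed point $N-1$, and the standard count $|\langle k\rangle| = \frac{N-1}{(k,N-1)}$.
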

\begin{proof}
It suffices to consider that the orbit of $0$ under the action of $Sh_k$ contains $N-1$ elements if and only if $k$ is a generator of the additive group $\mathbb{Z}_{N-1}$.
%The orbit of $0$ under the action of $Sh_k$ is
%$$
%0\to k\to 2k\to \cdots
%$$
%where all the entries are considered modulo $N-1$. If $(k,N-1)=t\neq 1$, then the orbit of 0 reaches $N-1$ after $N/t$ steps. Since we are performing the sum modulo $N-1$ this implies that the orbit of 0 under $Sh_k$ contains less than $N-1$ elements. On the other hand, if there is $s< N-2$ such that $sk=0$ mod $(N-1)$, then $k$ and $N-1$ are not coprime.
\end{proof}

\begin{remark}\rm
The perfect shuffle of Proposition \ref{proposizioneemme} is obtained by setting $k=n_m$. It is clear, in this case, that $(n_m,N-1)=1$.
\end{remark}

The previous lemma states that only the $Sh_k$'s, with $k$ coprime to $N-1$, give rise to permutations of $N$ elements. In the group theoretical view of the shuffle, one can ask what is the subgroup of $Sym(N)$ generated by the permutations $Sh_k$, where $(k,N-1)=1$. We denote such a group by $G_{Sh, N}$. We have the following description.

\begin{prop}\label{gruppodeglishuffling}
 Let $\mathbb{Z}_{N-1}^{\times}$ be the multiplicative subgroup of the invertible elements in $\mathbb{Z}_{N-1}$. Then
 $$
 G_{Sh, N}\simeq \mathbb{Z}_{N-1}^{\times}
 $$
\end{prop}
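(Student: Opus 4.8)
The plan is to realize each generator $Sh_k$ explicitly as a multiplication map on the additive group $\mathbb{Z}_{N-1}$, and then to show that $k\mapsto Sh_k$ is an isomorphism of $\mathbb{Z}_{N-1}^{\times}$ onto $G_{Sh,N}$.

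First I would unwind the definition of $Sh_k$. Identify $\{0,1,\ldots,N-2\}$ with $\mathbb{Z}_{N-1}$, the element $N-1$ playing the role of an isolated fixed point. By construction the horizontal representation of $Sh_k$ is the sequence $b_i = i\,k \bmod (N-1)$ for $0\le i\le N-2$, together with $b_{N-1}=N-1$; by the Lemma above this is a genuine permutation precisely when $(k,N-1)=1$. Reading off which label occupies each position exactly as in Example \ref{esempioemme} (where the horizontal representation records the \emph{preimages}, e.g. the value $1$ sits in position $4$ and correspondingly $\widetilde\sigma(1)=4$) shows that, as an element of $Sym(N)$, the permutation $Sh_k$ fixes both $0$ and $N-1$ and restricts on $\mathbb{Z}_{N-1}$ to the multiplication map $\mu_{k^{-1}}\colon x\mapsto k^{-1}x$, where $k^{-1}$ is the inverse of $k$ in $\mathbb{Z}_{N-1}^{\times}$. (Under the opposite convention for reading a permutation from its horizontal representation one would get $\mu_k$ instead; this does not affect the argument, since $k\mapsto k^{-1}$ is an automorphism of the abelian group $\mathbb{Z}_{N-1}^{\times}$.) Note this also recovers that $Sh_1=\mu_1$ is the identity.

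Next I would define $\Psi\colon \mathbb{Z}_{N-1}^{\times}\to Sym(N)$ by $\Psi(k)=Sh_k$ and check that it is a homomorphism. Writing $\mu_a$ for multiplication by $a$ on $\mathbb{Z}_{N-1}$, extended so as to fix $N-1$, one has $\mu_a\circ\mu_b=\mu_{ab}$, whence
$$
Sh_k\circ Sh_{k'} = \mu_{k^{-1}}\circ \mu_{k'^{-1}} = \mu_{(kk')^{-1}} = Sh_{kk'},
$$
so that $\Psi(k)\Psi(k')=\Psi(kk')$; in particular the generating set of $G_{Sh,N}$ is already closed under composition. Injectivity of $\Psi$ is immediate: if $Sh_k=Sh_{k'}$ then $\mu_{k^{-1}}=\mu_{k'^{-1}}$, and evaluating at $x=1$ gives $k^{-1}=k'^{-1}$, hence $k=k'$.

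Finally I would identify the image of $\Psi$ with $G_{Sh,N}$. Being a homomorphic image, $\Psi(\mathbb{Z}_{N-1}^{\times})=\{Sh_k:\,(k,N-1)=1\}$ is a subgroup of $Sym(N)$; it contains all the generators $Sh_k$ of $G_{Sh,N}$, so $G_{Sh,N}\subseteq \Psi(\mathbb{Z}_{N-1}^{\times})$, and the reverse inclusion is clear by definition. Thus $\Psi$ is an isomorphism of $\mathbb{Z}_{N-1}^{\times}$ onto $G_{Sh,N}$, giving the claim. The main obstacle is the first step, namely translating the order-theoretic description of $Sh_k$ (build the new arrangement by repeatedly adding $k$ modulo $N-1$) into the algebraic statement that $Sh_k$ is a \emph{multiplication} map, and in particular verifying that composing two shuffles multiplies the scalars rather than adding them; once the generators are pinned down as $\{\mu_u:\,u\in\mathbb{Z}_{N-1}^{\times}\}$, the homomorphism property and the computation of the generated group are routine.
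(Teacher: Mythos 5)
Your proposal is correct and follows essentially the same route as the paper's own proof: realize each $Sh_k$ as a multiplication map on $\mathbb{Z}_{N-1}$ (with $N-1$ an isolated fixed point) and check that $k\mapsto Sh_k$ is an injective homomorphism whose image generates, hence equals, $G_{Sh,N}$. Your only departure is the careful attention to the reading convention for the horizontal representation — identifying $Sh_k$ with $\mu_{k^{-1}}$ rather than $\mu_k$ — a point the paper glosses over, and which, as you note, is immaterial since inversion is an automorphism of the abelian group $\mathbb{Z}_{N-1}^{\times}$.
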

\begin{proof}
First notice that $Sh_k$ acts on the ordered set $\{0,1,\ldots, N-2\}$ by performing the multiplication by $k$ modulo $N-1$. In fact, the order induced by $Sh_k$ is given by
$$
0, k \textrm{ mod } N-1,  2k\textrm{ mod } N-1, \ldots, (N-2)k \textrm{ mod } N-1
$$
which is exactly the multiplication by $k$ in $\mathbb{Z}_{N-1}$. Therefore, we can construct a map $\phi: \mathbb{Z}_{N-1}^{\times}\to G_{Sh, N}$ such that $\phi(k)=Sh_k$.
Let us prove that $\phi$ is an isomorphism by showing that it is an injective homomorphism. We have that $\phi(hk)=Sh_{hk}$ is the multiplication by $hk$ modulo $N-1$. On the other hand $\phi(h)\phi(k)$ is the multiplication by $k$ modulo $N-1$, followed by the multiplication by $h$ modulo $N-1$. The equality holds since
$$
hk \cdot x \ {\rm mod} \ (N-1) = h\cdot (kx \ {\rm mod} \ (N-1)) {\rm mod } (N-1).
$$
It is moreover clear that $\phi$ is injective, since the only action fixing everything is that induced by $Sh_1=\phi(1)$.
\end{proof}

As a consequence, we have $|G_{Sh, N}| =|\mathbb{Z}_{N-1}^{\times}| = \varphi(N-1)$, where $\varphi$ denotes the Euler function.

\begin{example}\rm
For $N=8$ (see Table \ref{TableSh}) and $N=15$ (see Table \ref{TableSh2}), the groups $\mathbb{Z}_{7}^{\times}$ and $\mathbb{Z}_{14}^{\times}$ are both isomorphic to the cyclic group of $6$ elements.
\begin{table}\small
\begin{tabular}{|c|c|c|c|c|c|c|c|c|}
\hline
$N=8$  & $0$ & $1$ & $2$ & $3$ & $4$ & $5$ & $6$ & $7$   \\
\hline
$Sh_1$  & $0$ & $1$ & $2$ & $3$ & $4$ & $5$ & $6$ & $7$   \\
\hline
$Sh_2$  & $0$ & $2$ & $4$ & $6$ & $1$ & $3$ & $5$ & $7$   \\
 \hline
$Sh_3$  & $0$ & $3$ & $6$ & $2$ & $5$ & $1$ & $4$ & $7$   \\
 \hline
$Sh_4$  & $0$ & $4$ & $1$ & $5$ & $2$ & $6$ & $3$ & $7$   \\
 \hline
$Sh_5$  & $0$ & $5$ & $3$ & $1$ & $6$ & $4$ & $2$ & $7$   \\
 \hline
$Sh_6$  & $0$ & $6$ & $5$ & $4$ & $3$ & $2$ & $1$ & $7$   \\
 \hline
\end{tabular} \caption{The permutations induced by $Sh_k$ for $N=8$.} \label{TableSh}
\end{table}
\begin{table}\small
\begin{tabular}{|c|c|c|c|c|c|c|c|c|c|c|c|c|c|c|c|}
\hline
$N=15$  & $0$ & $1$ & $2$ & $3$ & $4$ & $5$ & $6$ & $7$ & $8$ & $9$ & $10$ & $11$ & $12$ & $13$ & $14$   \\
\hline
$Sh_1$  & $0$ & $1$ & $2$ & $3$ & $4$ & $5$ & $6$ & $7$ & $8$ & $9$ & $10$ & $11$ & $12$ & $13$ & $14$   \\
\hline
$Sh_3$  & $0$ & $3$ & $6$ & $9$ & $12$ & $1$ & $4$ & $7$ & $10$ & $13$ & $2$ & $5$ & $8$ & $11$ & $14$   \\
 \hline
$Sh_5$  & $0$ & $5$ & $10$ & $1$ & $6$ & $11$ & $2$ & $7$ & $12$ & $3$ & $8$ & $13$ & $4$ & $9$ & $14$   \\
 \hline
$Sh_9$  & $0$ & $9$ & $4$ & $13$ & $8$ & $3$ & $12$ & $7$ & $2$ & $11$ & $6$ & $1$ & $10$ & $5$ & $14$   \\
 \hline
$Sh_{11}$  & $0$ & $11$ & $8$ & $5$ & $2$ & $13$ & $10$ & $7$ & $4$ & $1$ & $12$ & $9$ & $6$ & $3$ & $14$   \\
 \hline
$Sh_{13}$  & $0$ & $13$ & $12$ & $11$ & $10$ & $9$ & $8$ & $7$ & $6$ & $5$ & $4$ & $3$ & $2$ & $1$ & $14$   \\
 \hline
\end{tabular} \caption{The permutations induced by $Sh_k$ for $N=15$.} \label{TableSh2}
\end{table}
\end{example}

\begin{remark}\rm
We address now the question about which permutations of type $Sh_k$ can be obtained by our shuffling matrix construction introduced in Section \ref{mainsection}. If $N=n_1\cdots n_m$, we have defined the shuffles $Sh_k$ where $(k,N-1)=1$. Notice that, each factor of $N$ is coprime to $N-1$ and so it is an element of $\mathbb{Z}_{N-1}^{\times}$.
Suppose that $N=hk$, with $h,k\neq 1$, then by considering an opportune rooted tree $T$ whose last level has branch index $k$ (actually, it is enough to consider the tree $T_{h,k}$ of depth $2$ with branch indices $(h,k)$) we have that the permutation of $Sym(N)$ given by $Sh_k$ coincides with the perfect shuffle induced on the tree by choosing the cyclic permutation $\sigma=(1\ 2\ \ldots)$ whose length is the depth of $T$. On the other hand, if $k$ is not a factor of $N$, but it is coprime to $N-1$, then $Sh_k$ induces a permutation that cannot be represented through a shuffling matrix acting on $N$ elements, as defined in Section \ref{mainsection}.
\end{remark}

\section{An application to the Discrete Fourier Transform}\label{sectionfourier}

The Discrete Fourier Transform (DFT) is extensively used in a large number of fields, going from Pure mathematics (Number theory, Harmonic analysis, Partial Differential Equations) to several applications (Digital signal processing, including speech analysis and radar, or Image processing). The basic setting of finite Fourier analysis is represented by the multiplicative group $\mathbb{Z}(N)$ consisting of the $N$-th roots of the unity. Such group is the natural discrete approximation of the circle, as $N$ goes to infinity, and for this reason it is a very good candidate for the storage of information of any function on the circle, and for all the numerical applications involving Fourier series.\\
One of the most largely used way to compute the DFT is by means of the so called \textit{discrete Fourier transform matrix}, which is a particular example of a \textit{Vandermonde matrix}. Recall that the Vandermonde matrix $V(a_1,\ldots, a_N)$ is defined as the square matrix of size $N$ given by
$$
V(a_1,\ldots, a_N) = \left(
                       \begin{array}{ccccc}
                         1 & 1 & \cdots & \cdots & 1 \\
                         a_1 & a_2 & \cdots & \cdots & a_N \\
                         a_1^2 & a_2^2 & \cdots & \cdots & a_N^2 \\
                         \vdots & \vdots &  &  & \vdots \\
                         a_1^{N-1} & a_2^{N-1} & \cdots & \cdots & a_N^{N-1} \\
                       \end{array}
                     \right).
$$
For each $N\in \mathbb{N}$, the $N\times N$ discrete Fourier transform matrix is defined to be the matrix $F_N(\omega) = (f_{ij})_{i,j=0,\ldots,N-1}$, with
$$
f_{ij} = \omega^{ij}, \quad i,j=0,1,\ldots, N-1; \quad \omega = \exp\left(\frac{2\pi i}{N}\right).
$$
Then it can be easily seen that $F_N(\omega) = V(1,\omega,\omega^2,\ldots, \omega^{N-1})$. The DFT of an $N$-vector $x=(x_0,x_1,\ldots, x_{N-1})^T$ is the vector $y=(y_0,y_1,\ldots, y_{N-1})^T$ defined by the formula
\begin{eqnarray}\label{startfrom}
y_j = \sum_{k=0}^{N-1}x_k\omega^{jk}, \qquad 0\leq j\leq N-1,
\end{eqnarray}
that is, by performing the standard multiplication $y = F_N(\omega)x$.

The problem which naturally arises in numerical analysis, is to determine an algorithm minimizing the amount of time it takes a computer to calculate the Fourier coefficients of a given function on $\mathbb{Z}(N)$. The Fast Fourier Transform (FFT) is a method developed with the aim of calculating efficiently such coefficients.
For this reason, many numerical algorithms implementing the DFT usually employ FFT algorithms.
In particular, the situation is quite interesting in the case $N=2^n$,  since the computations of the Fourier coefficients in this case uses the fact that an induction in $n$ requires only $\log N$ steps to go from $N=1$ to $N=2^n$, saving time in the practical applications. Roughly speaking, an FFT rapidly computes such coefficients by factorizing the DFT matrix into a product of sparse (mostly zero) blocks. Therefore, it manages to reduce the complexity of computing the DFT from $O(N^2)$, which arises if one simply applies the definition of DFT, to $O(N \log N)$, where $N$ is the data size \cite{brigham}. For these reasons, Fast Fourier Transforms are ubiquitously used for applications in several branches of Engineering.

In the seminal paper \cite{seminal} by Cooley and Tukey, the authors start from \eqref{startfrom} and map the integers $j$ and $k$ bijectively to ordered pairs $(p,q)$, with $0\leq p\leq s-1$ and $0\leq q\leq r-1$, with $N=rs$, replacing \eqref{startfrom} by a permuted double summation, showing that the resulting summation can be interpreted as a series of smaller DFT computations.\\ Many variations and generalizations of this construction were obtained in the literature: the best-known FFT algorithms depend on the factorization of $N$, even if there exist FFTs with $O(N \log N)$ complexity also for prime $N$; more general versions of FFT are applicable when $N$ is composite and not necessarily a power of $2$ (see, for instance, the Bluestein algorithm using a convolution approach \cite{segnali}).

In the paper \cite{rose} by Rose, shuffling permutations are applied in the setting of Discrete Fourier Transform. More precisely, Theorem 1 shows the following fundamental identity, called {\it General radix identity}, in the case $n=rs$:
\begin{eqnarray}\label{allabase}
 F_n(\omega) P_s^r = (F_r(\omega^s)\otimes I_s)T^s_r (I_r\otimes F_s(\omega^r)),
\end{eqnarray}
with $T^s_r(\omega) = D_r(D_s(\omega))$, and
$$
D_s(W) = diag(W^0,W^1, \ldots, W^{s-1}) = \left(
                                            \begin{array}{ccccc}
                                              W^0 & 0 & \cdots & \cdots & 0 \\
                                              0 & W^1 &  &  & \vdots \\
                                              \vdots &  & \ddots &  & \vdots \\
                                              \vdots &  &  & \ddots & 0 \\
                                              0 & \cdots & \cdots & 0 & W^{s-1} \\
                                            \end{array}
                                          \right),
$$
where $W^i$ denotes the $i$-th power of the square matrix $W$. The Equation \eqref{allabase} is the basis of the Cooley-Tukey FFT and all its commonly used variants. It is easy to see that, in the case $n = n_1n_2$, with $n_2=2$, the identity above can also be rewritten as
\begin{eqnarray}\label{fourierparticolare}
F_{2n_1}(\omega) (P^{(12)}_{n_1,2})^T &=& \left(
                                     \begin{array}{cc}
                                       F_{n_1}(\omega^2) & D_{n_1}(\omega)F_{n_1}(\omega^2)  \\
                                       F_{n_1}(\omega^2) & -D_{n_1}(\omega)F_{n_1}(\omega^2) \\
                                     \end{array}
                                   \right)\\
                                   &=& \left(
                                     \begin{array}{cc}
                                       I_{n_1} & D_{n_1}(\omega) \\
                                       I_{n_1} & -D_{n_1}(\omega) \\
                                     \end{array}
                                   \right)  \left(
                                     \begin{array}{cc}
                                       F_{n_1}(\omega^2) & 0 \\
                                       0 & F_{n_1}(\omega^2) \\
                                     \end{array}
                                   \right).\nonumber
\end{eqnarray}
The aim of this section is to generalize Equation \eqref{fourierparticolare} to any setting $N = n_1n_2\cdots n_m$, for any permutation $\sigma \in Sym(m)$.
The main result of this section is contained in the following fundamental theorem.

\begin{theorem}\label{generaltheoremfourier}
Let $N=n_1n_2\cdots n_m$, and let $P^{\sigma}_{n_1,n_2,\ldots,n_m}$ be the shuffling matrix defined as in \eqref{maindefinition}, with $\sigma\in Sym(m)$. Let $\widetilde{\sigma}$ be the permutation of the elements $\{0,1\ldots, N-1\}$ induced by $P^{\sigma}_{n_1,\ldots, n_m}$. Put $\omega = \exp\left(\frac{2\pi i}{N}\right)$. Then:
\begin{eqnarray}\label{blocchi3}
F_{N}(\omega)(P^{\sigma}_{n_1,n_2,\ldots,n_m})^T &=& \left(
                                                                 \begin{array}{ccccc}
                                                                   B_{00} & B_{01} & \cdots & \cdots & B_{0\ n_{m}\!-\!1}\\
                                                                   B_{10} & B_{11} &\cdots & \cdots & \vdots \\
                                                                   \vdots &  &  &  & \vdots \\
                                                                   \vdots &  &  &  & \vdots \\
                                                                   B_{n_m\!-\!1\ 0} & B_{n_m\!-\!1\ 1} & \cdots & \cdots & B_{n_m\!-\!1\ n_m\!-\!1}  \\
                                                                 \end{array}
                                                               \right)
\end{eqnarray}
where, for each $h,k=0,\ldots, n_m-1$, one has $B_{hk}=C_{hk}A_h$, with
\begin{eqnarray*}
C_{hk} = \omega^{\frac{Nh}{n_m}\widetilde{\sigma}^{-1}\left(\frac{Nk}{n_m}\right)}\cdot D_{N/n_m}(\omega^{\widetilde{\sigma}^{-1}\left(\frac{Nk}{n_m}\right)})
\end{eqnarray*}
and $A_h$ is the square matrix of size $N/n_m$, whose entry $(i,j)$ is equal to $\omega^{(i+\frac{hN}{n_m})\widetilde{\sigma}^{-1}(j)}$, for each $i,j=0,1,\ldots, \frac{N}{n_m}-1$.
\end{theorem}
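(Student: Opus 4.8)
The plan is to prove the identity entrywise, reducing everything to a single exponential identity for the permutation $\widetilde{\sigma}$. First I would record how the two factors act on a matrix entry. Since the Fourier matrix is symmetric, $F_N(\omega)^T=F_N(\omega)$, and since any permutation matrix is orthogonal, $(P^{\sigma}_{n_1,\ldots,n_m})^T=(P^{\sigma}_{n_1,\ldots,n_m})^{-1}$ is the permutation matrix of $\widetilde{\sigma}^{-1}$, so right multiplication by $(P^{\sigma}_{n_1,\ldots,n_m})^T$ merely permutes the columns of $F_N(\omega)$. Concretely, for $a,b\in\{0,\ldots,N-1\}$ the $(a,b)$ entry of $F_N(\omega)(P^{\sigma}_{n_1,\ldots,n_m})^T$ equals $\omega^{a\,\widetilde{\sigma}^{-1}(b)}$, where $\widetilde{\sigma}^{-1}(b)$ is computed through the explicit description of Proposition \ref{propositionaction}.

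Next I would introduce the block coordinates. Writing $M=N/n_m$, I split each index according to the last branch index by setting $a=hM+i$ and $b=kM+j$, with $h,k\in\{0,\ldots,n_m-1\}$ and $i,j\in\{0,\ldots,M-1\}$; this is exactly the partition of $F_N(\omega)(P^{\sigma}_{n_1,\ldots,n_m})^T$ into the $n_m\times n_m$ array of blocks of size $M$ appearing in \eqref{blocchi3}, the block $B_{hk}$ occupying rows $hM,\ldots,hM+M-1$ and columns $kM,\ldots,kM+M-1$. With this notation the entry computed above becomes $(B_{hk})_{ij}=\omega^{(hM+i)\,\widetilde{\sigma}^{-1}(kM+j)}$, and the goal is to identify this with $(C_{hk}A_h)_{ij}$.

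The heart of the argument is the additive identity
\begin{equation}\label{keyadd}
\widetilde{\sigma}^{-1}(kM+j)\equiv \widetilde{\sigma}^{-1}(kM)+\widetilde{\sigma}^{-1}(j)\pmod N,
\end{equation}
which I expect to be the main obstacle. I would prove it by exploiting that $\widetilde{\sigma}$, and hence $\widetilde{\sigma}^{-1}$, acts on $\{0,\ldots,N-1\}$ by permuting the digits of the mixed radix representation, as made explicit in Proposition \ref{propositionaction}. The point is that the splitting $b=kM+j$ with $M=N/n_m$ isolates the radix $n_m$: the multiple $kM$ carries only the single digit $k$ attached to $n_m$, while $j<M$ carries the remaining digits, so the two summands occupy disjoint positions and add with no carrying. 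Applying the digit permutation $\widetilde{\sigma}^{-1}$ then sends them to positions whose contributions again combine without interference, which yields \eqref{keyadd}. Verifying the no-carrying step, and keeping track of where $\sigma$ moves the position of the radix $n_m$ relative to the others, is the delicate part of the computation.

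Finally, granting \eqref{keyadd}, the conclusion is a routine factorisation. I would write
\begin{equation*}
\omega^{(hM+i)\,\widetilde{\sigma}^{-1}(kM+j)}=\omega^{(hM+i)\,\widetilde{\sigma}^{-1}(kM)}\cdot\omega^{(hM+i)\,\widetilde{\sigma}^{-1}(j)}
\end{equation*}
and recognise the two factors directly. The first equals $\omega^{\frac{Nh}{n_m}\widetilde{\sigma}^{-1}(kM)}$ times $\bigl(D_{N/n_m}(\omega^{\widetilde{\sigma}^{-1}(kM)})\bigr)_{ii}$, that is, the $(i,i)$ diagonal entry of $C_{hk}$; the second is exactly the $(i,j)$ entry of $A_h$. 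Since $C_{hk}$ is diagonal one has $(C_{hk}A_h)_{ij}=(C_{hk})_{ii}(A_h)_{ij}$, so $B_{hk}=C_{hk}A_h$ block by block, which is precisely \eqref{blocchi3}. As a consistency check I would specialise to $m=2$, $n_2=2$ and recover \eqref{fourierparticolare}: there $\widetilde{\sigma}^{-1}(j)=2j$ forces $A_0=A_1=F_{n_1}(\omega^2)$, while the twiddle factors $C_{hk}$ reproduce $\pm D_{n_1}(\omega)$.
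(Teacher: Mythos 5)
Your reduction of the theorem to the additive identity is correct and, in fact, reversible: with the paper's conventions the $(a,b)$ entry of $F_N(\omega)(P^{\sigma}_{n_1,\ldots,n_m})^T$ is indeed $\omega^{a\widetilde{\sigma}^{-1}(b)}$, and since $C_{hk}$ is diagonal, Theorem \ref{generaltheoremfourier} is \emph{equivalent} to the statement
\[
\widetilde{\sigma}^{-1}(kM+j)\equiv\widetilde{\sigma}^{-1}(kM)+\widetilde{\sigma}^{-1}(j)\pmod N,\qquad M=\tfrac{N}{n_m},\ \ 0\le k<n_m,\ \ 0\le j<M.
\]
The gap is in your justification of this identity: the ``disjoint positions, no carrying'' argument fails for general $\sigma$, and so does the identity. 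The point you gloss over is \emph{which} mixed radix expansion is relevant. By Proposition \ref{propositionaction}, $\widetilde{\sigma}$ writes its output in the basis $[n_{\sigma^{-1}(1)},\ldots,n_{\sigma^{-1}(m)}]$, so to compute $\widetilde{\sigma}^{-1}(b)$ you must expand $b$ in \emph{that} basis and un-permute the digits. In that basis the digit of radix $n_m$ sits at position $\sigma(m)$ and carries the weight $\prod_{s>\sigma(m)}n_{\sigma^{-1}(s)}$, which equals $M$ only when $\sigma(m)=1$. Only in that case is $kM$ a single digit, so that $kM$ and $j<M$ occupy disjoint digit positions; otherwise carries occur and do not cancel. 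Concretely, take $m=3$, $(n_1,n_2,n_3)=(2,2,3)$, $\sigma=(1\ 2)$, for which $\widetilde{\sigma}=\widetilde{\sigma}^{-1}=(3\ 6)(4\ 7)(5\ 8)$ with all other points fixed (Example \ref{esempio finale}). With $M=4$, $k=1$, $j=2$ one gets
\[
\widetilde{\sigma}^{-1}(6)=3,\qquad \widetilde{\sigma}^{-1}(4)+\widetilde{\sigma}^{-1}(2)=7+2=9,\qquad 3\not\equiv 9 \pmod{12},
\]
so your key identity is false.

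Because your reduction is an equivalence, this cannot be patched: the same numbers refute Theorem \ref{generaltheoremfourier} itself for $\sigma=(1\ 2)$. The entry of $F_{12}(\omega)\bigl(P^{(1\ 2)}_{2,2,3}\bigr)^T$ in row $1$, column $6$ equals $\omega^{\widetilde{\sigma}^{-1}(6)}=\omega^{3}$, whereas the claimed block $B_{01}=C_{01}A_0$ has $(1,2)$ entry $\omega^{\widetilde{\sigma}^{-1}(4)}\cdot\omega^{\widetilde{\sigma}^{-1}(2)}=\omega^{9}\neq\omega^{3}$. The identity, your argument, and the theorem are all correct precisely when the radix-$n_m$ digit is the leading one after the shuffle, e.g.\ whenever $\sigma(m)=1$: this covers $\sigma=(1\ 2\ \ldots\ m)$ (Proposition \ref{propositionparticolare}), the classical case \eqref{fourierparticolare}, and the paper's worked example $\sigma=(1\ 3)$ with $(n_1,n_2,n_3)=(2,2,3)$, which is why the failure is easy to miss. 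For comparison, the paper's own proof runs the same entrywise computation, but it evaluates $\widetilde{\sigma}^{-1}(Nk/n_m)$ and $\widetilde{\sigma}^{-1}(j)$ by a purely formal substitution of indices ($j_t\mapsto j_{\sigma(t)}$, $n_t\mapsto n_{\sigma(t)}$) in the hybrid expansion of $j$; that substitution is not the inverse of the permutation $\widetilde{\sigma}$ defined in the statement (it is not even a function of the integer it is applied to), so the published argument harbours exactly the error that your more explicit formulation brings to the surface. A correct statement would either restrict to $\sigma$ with $\sigma(m)=1$, or redefine $C_{hk}$ and $A_h$ so that the exponents are the genuine values $\widetilde{\sigma}^{-1}(kM+j)$.
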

\begin{proof}
We will prove our claim by showing that the entries $(i,j)$ of the matrices on both the members of \eqref{blocchi3} coincide.
Let
$$
i = i_mn_1\cdots n_{m-1}+ i_1n_2\cdots n_{m-1}+ \cdots + i_{m-2}n_{m-1}+i_{m-1}
$$
and
$$
j = j_mn_1\cdots n_{m-1}+ j_1n_2\cdots n_{m-1}+ \cdots + j_{m-2}n_{m-1}+j_{m-1}
$$
be the representations of the integers $i$ and $j$, respectively, with respect to the vector basis $[n_m, n_{1}, n_2, \ldots, n_{m-1}]$. This implies that the entry $(i,j)$ of the matrix on the right-hand member of \eqref{blocchi3} belongs to the block $B_{i_m j_m}=C_{i_mj_m}A_{i_m}$, since each square block $B_{hk}$ has size $n_1\cdots n_{m-1}$ by definition. In this case, we have $\frac{Nk}{n_m} = j_mn_1\ldots n_{m-1}$, so that $\widetilde{\sigma}^{-1}(Nk/n_m) = j_{\sigma(m)}n_{\sigma(1)}\cdots n_{\sigma(m-1)}$. Moreover, the term of the matrix $D_{N/n_m}(\omega^{\widetilde{\sigma}^{-1}\left(\frac{Nj_m}{n_m}\right)})$ contributing to the entry $(i,j)$ of the matrix on the right-hand member of Equation \eqref{blocchi3} is its $(i_1n_2\cdots n_{m-1}+ \cdots + i_{m-2}n_{m-1}+i_{m-1})$-th diagonal entry, given by
$$
\omega^{(i_1n_2\cdots n_{m-1}+ \cdots + i_{m-2}n_{m-1}+i_{m-1})j_{\sigma(m)}n_{\sigma(1)}\cdots n_{\sigma(m-1)}}.
$$
Similarly, the entry of $A_{i_m}$ contributing to $(i,j)$ is its entry at row $ i_1n_2\cdots n_{m-1}+ \cdots + i_{m-2}n_{m-1}+i_{m-1}$ and column $j_1n_2\cdots n_{m-1}+ \cdots + j_{m-2}n_{m-1}+j_{m-1}$, which is given by
$$
\omega^{((i_1n_2\cdots n_{m-1}+ \cdots + i_{m-2}n_{m-1}+i_{m-1})+i_mn_1\cdots n_{m-1})\cdot (j_{\sigma(1)}n_{\sigma(2)}\cdots n_{\sigma(m-1)}+ \cdots + j_{\sigma(m-2)}n_{\sigma(m-1)}+j_{\sigma(m-1)})}
$$
$$
= \omega^{i(j_{\sigma(1)}n_{\sigma(2)}\cdots n_{\sigma(m-1)}+ \cdots + j_{\sigma(m-2)}n_{\sigma(m-1)}+j_{\sigma(m-1)})}.
$$
By taking the product of all contributions, we get the following value:
$$
\omega^{i_mn_1\ldots n_{m-1}j_{\sigma(m)}n_{\sigma(1)}\cdots n_{\sigma(m-1)}}\cdot \omega^{(i_1n_2\cdots n_{m-1}+ \cdots + i_{m-2}n_{m-1}+i_{m-1})j_{\sigma(m)}n_{\sigma(1)}\cdots n_{\sigma(m-1)}}\cdot
$$
$$
\cdot \omega^{i(j_{\sigma(1)}n_{\sigma(2)}\cdots n_{\sigma(m-1)}+ \cdots + j_{\sigma(m-2)}n_{\sigma(m-1)}+j_{\sigma(m-1)})}
$$
$$
= \omega^{i(j_{\sigma(m)}n_{\sigma(1)}\cdots n_{\sigma(m-1)}+j_{\sigma(1)}n_{\sigma(2)}\cdots n_{\sigma(m-1)}+ \cdots + j_{\sigma(m-2)}n_{\sigma(m-1)}+j_{\sigma(m-1)})}
$$
Let us consider now the entry $(i,j)$ of the matrix on the left-hand member of Equation \eqref{blocchi3}. From the definition of matrix multiplication, such entry is equal to
$$
\sum_{\ell=0}^{N-1} \omega^{i\ell} p_{j\ell},
$$
where $p_{j\ell}$ denotes the entry $(j,\ell)$ of the matrix $P^{\sigma}_{n_1,n_2,\ldots,n_m}$. Now it is clear that there exists a unique index $\ell$ such that the entry $p_{j\ell}$ is nonzero. More precisely, the entry $p_{j\ell}$ is nonzero and equals $1$ if and only if
$$
\ell = j_{\sigma(m)}n_{\sigma(1)}\cdots n_{\sigma(m-1)}+j_{\sigma(1)}n_{\sigma(2)}\cdots n_{\sigma(m-1)}+ \cdots + j_{\sigma(m-2)}n_{\sigma(m-1)}+j_{\sigma(m-1)}.
$$
This ensures that the entry $(i,j)$ of the left-hand member of \eqref{blocchi3} equals
$$
\omega^{i(j_{\sigma(m)}n_{\sigma(1)}\cdots n_{\sigma(m-1)}+j_{\sigma(1)}n_{\sigma(2)}\cdots n_{\sigma(m-1)}+ \cdots + j_{\sigma(m-2)}n_{\sigma(m-1)}+j_{\sigma(m-1)})},
$$
and this completes the proof.
\end{proof}

In the case $\sigma = (1\ 2\ \ldots\ m)\in Sym(m)$, we get the following result.

\begin{prop}\label{propositionparticolare}
Let $N=n_1n_2\cdots n_m$, and consider the permutation matrix $P^{(1\ 2\ \ldots m)}_{n_1,n_2,\ldots,n_m}$. Put $\omega = \exp\left(\frac{2\pi i}{N}\right)$. Then:
\begin{eqnarray}\label{blocchi}
F_{N}(\omega)(P^{(1\ 2\ \ldots m)}_{n_1,n_2,\ldots,n_m})^T &=& \left(
                                                                 \begin{array}{ccccc}
                                                                   B_{00} & B_{01} & \cdots & \cdots & B_{0\ n_{m}\!-\!1}\\
                                                                   B_{10} & B_{11} &\cdots & \cdots & \vdots \\
                                                                   \vdots &  &  &  & \vdots \\
                                                                   \vdots &  &  &  & \vdots \\
                                                                   B_{n_m\!-\!1\ 0} & B_{n_m\!-\!1\ 1} & \cdots & \cdots & B_{n_m\!-\!1\ n_m\!-\!1}  \\
                                                                 \end{array}
                                                               \right)
\end{eqnarray}
where $B_{hk} = \omega^{\frac{Nhk}{n_m}}\cdot D_{N/n_m}(\omega^k)\cdot F_{N/n_m}(\omega^{n_m})$, for every $h,k=0,\ldots, n_m-1$.
\end{prop}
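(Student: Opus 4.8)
The plan is to obtain the statement as a direct specialization of Theorem \ref{generaltheoremfourier} to the cyclic permutation $\sigma = (1\ 2\ \ldots\ m)$, so that the entire task reduces to evaluating $\widetilde{\sigma}^{-1}$ on the only two families of integers that actually enter the general block formula: the column indices $j\in\{0,\ldots,N/n_m-1\}$ occurring inside $A_h$, and the multiples $Nk/n_m$ with $k\in\{0,\ldots,n_m-1\}$ occurring inside $C_{hk}$. Once these values are known, substituting them into $C_{hk} = \omega^{\frac{Nh}{n_m}\widetilde{\sigma}^{-1}(Nk/n_m)} D_{N/n_m}(\omega^{\widetilde{\sigma}^{-1}(Nk/n_m)})$ and into $A_h = (\omega^{(i+hN/n_m)\widetilde{\sigma}^{-1}(j)})_{i,j}$ and then multiplying should collapse directly to the asserted $B_{hk}$.

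First I would make $\widetilde{\sigma}^{-1}$ explicit for the perfect shuffle. By Proposition \ref{proposizioneemme}, for $\sigma=(1\ 2\ \ldots\ m)$ the permutation $\widetilde{\sigma}$ sends the array index $i\,n_m + \ell$ (with $0\le i\le N/n_m-1$ and $0\le \ell\le n_m-1$) to $\ell\,(N/n_m) + i$; reading this backwards yields the clean rule $\widetilde{\sigma}^{-1}\!\left(a\,\tfrac{N}{n_m} + b\right) = b\,n_m + a$ for $0\le a\le n_m-1$ and $0\le b\le N/n_m-1$. Applying it to the two families: for $j\le N/n_m-1$ the unique representation has $a=0$, $b=j$, whence $\widetilde{\sigma}^{-1}(j) = j\,n_m$; and for $Nk/n_m = k\,\tfrac{N}{n_m}$ one has $a=k$, $b=0$, whence $\widetilde{\sigma}^{-1}(Nk/n_m) = k$. (Alternatively, both evaluations can be read off from Proposition \ref{propositionaction}.)

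Next I would feed these into the block data. For $A_h$, its $(i,j)$ entry becomes $\omega^{(i+hN/n_m)\,j\,n_m} = \omega^{i\,j\,n_m}\,\omega^{hNj}$, and since $\omega$ is an $N$-th root of unity we have $\omega^{hNj} = (\omega^N)^{hj} = 1$; hence the entry equals $(\omega^{n_m})^{ij}$, which is exactly the $(i,j)$ entry of $F_{N/n_m}(\omega^{n_m})$. In particular $A_h = F_{N/n_m}(\omega^{n_m})$ for every $h$, independently of $h$. For $C_{hk}$, substituting $\widetilde{\sigma}^{-1}(Nk/n_m) = k$ gives at once $C_{hk} = \omega^{Nhk/n_m}\, D_{N/n_m}(\omega^k)$. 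Multiplying (the scalar factor commutes with the matrices), $B_{hk} = C_{hk} A_h = \omega^{Nhk/n_m}\, D_{N/n_m}(\omega^k)\, F_{N/n_m}(\omega^{n_m})$, which is the claimed identity.

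The only genuinely delicate points, and where I would be most careful, are the two index computations $\widetilde{\sigma}^{-1}(j) = j\,n_m$ and $\widetilde{\sigma}^{-1}(Nk/n_m) = k$: they rely on the ranges $j<N/n_m$ and $k<n_m$ placing these integers uniquely in the array form above (one must check, for instance, that $j\,n_m\le N-n_m$ so that no wraparound occurs), together with the vanishing $\omega^{hNj}=1$ that makes all the diagonal blocks $A_h$ coincide. Everything else is a mechanical substitution into the already-proved general formula of Theorem \ref{generaltheoremfourier}.
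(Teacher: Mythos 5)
Your proposal is correct and follows essentially the same route as the paper's own proof: both specialize Theorem \ref{generaltheoremfourier} to $\sigma=(1\ 2\ \ldots\ m)$, reduce everything to the two evaluations $\widetilde{\sigma}^{-1}(j)=jn_m$ and $\widetilde{\sigma}^{-1}\!\left(\frac{Nk}{n_m}\right)=k$, and use $\omega^{N}=1$ to conclude that every $A_h$ collapses to $F_{N/n_m}(\omega^{n_m})$. Your explicit derivation of the rule $\widetilde{\sigma}^{-1}\!\left(a\,\tfrac{N}{n_m}+b\right)=b\,n_m+a$ from Proposition \ref{proposizioneemme} simply spells out what the paper leaves implicit.
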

\begin{proof}
It suffices to observe that, in the case $\sigma=(1\ 2 \ldots\ m)$, we have $\widetilde{\sigma}^{-1}\left(\frac{Nk}{n_m}\right)=k$, and that the entry $(i,j)$ of $A_h$ reduces to $\omega^{(i+\frac{hN}{n_m})jn_m}=\omega^{ijn_m}$, since $\widetilde{\sigma}^{-1}(j) = jn_m$ in this case. Therefore, the matrix $A_h$ reduces to the matrix $F_{N/n_m}(\omega^{n_m})$, for every $h$.
\end{proof}

\begin{remark}\rm
Note that, if we put $k=0$ in Theorem \ref{generaltheoremfourier}, then we get $B_{h0} = A_h$, for every $h=0,\ldots, n_m-1$. Similarly, if we put $k=0$ in Proposition \ref{propositionparticolare}, we get $B_{h0} = F_{N/n_m}(\omega^{n_m})$, for each $h=0,\ldots, n_m-1$.

Moreover, the matrix in Equation \eqref{blocchi} can be also reformulated as follows:
\begin{eqnarray*}\label{blocchi2}
\left(
                                                                 \begin{array}{cccc}
                                                                   C_{00} & C_{01} & \cdots & C_{0\ n_{m}\!-\!1}\\
                                                                   C_{10} & C_{11} & \cdots & \vdots \\
                                                                   \vdots &  &   & \vdots \\
                                                                  C_{n_m\!-\!1\ 0} & C_{n_m\!-\!1\ 1} & \cdots & C_{n_m\!-\!1\ n_m\!-\!1}\\
                                                                   \end{array}
                                                               \right)\cdot
                                                               \left(
                                                                                                                     \begin{array}{cccc}
  F_{N/n_m}(\omega^{n_m}) & 0  & \cdots & 0 \\
                                                                                   0 & F_{N/n_m}(\omega^{n_m}) &    & \vdots \\

                                                                                  \vdots &  &  \ddots & 0 \\
                                                                                   0 & \cdots & 0 & F_{N/n_m}(\omega^{n_m}) \\
                                                                                 \end{array}
                                                                               \right)
\end{eqnarray*}
with $C_{hk} = \omega^{\frac{Nhk}{n_m}}D_{N/n_m}(\omega^k)$. Notice that, for every $h=0,\ldots, n_m-1$, we have $C_{h0} = I_{n_m}$. In particular, for $m=2$, $n_2=2$ and $\sigma=(1\ 2)$, we obtain Equation \eqref{fourierparticolare}.
\end{remark}

\begin{example}\rm
Let $n_1=n_2=2$ and $n_3=3$, so that $N=12$. Choose the permutation $\sigma=(1\ 3) \in Sym(3)$. Then we have
\begin{eqnarray*}
P^{(1\ 3)}_{2,2,3} &=& \sum_{\substack{i_j = 1,\ldots,n_j \\ j=1,2,3}} E^{i_3,i_1}_{3\times 2} \otimes E^{i_2,i_2}_{2\times 2} \otimes E^{i_1,i_3}_{2\times 3}\\
&=&   E^{1,1}_{3\times 2} \otimes I_2 \otimes E^{1,1}_{2\times 3}+ E^{2,1}_{3\times 2} \otimes I_2 \otimes E^{1,2}_{2\times 3}+ E^{3,1}_{3\times 2} \otimes I_2 \otimes E^{1,3}_{2\times 3}\\
 &+&  E^{1,2}_{3\times 2} \otimes I_2 \otimes E^{2,1}_{2\times 3}+ E^{2,2}_{3\times 2} \otimes I_2 \otimes E^{2,2}_{2\times 3}+ E^{3,2}_{3\times 2} \otimes I_2 \otimes E^{2,3}_{2\times 3}.
\end{eqnarray*}
For $\omega = \exp\left(\frac{2\pi i}{12}\right)$, let $F_{12}(\omega) = (\omega^{ij})_{i,j=0,\ldots,11}$ be the Discrete Fourier Matrix of size $12$. Then it holds:
\begin{eqnarray*}
F_{12}(\omega)(P^{(1\ 3)}_{2,2,3})^T &=& \left(
                                       \begin{array}{ccc}
                                         B_{00} & B_{01} & B_{02} \\
                                         B_{10} & B_{11} & B_{12} \\
                                         B_{20} & B_{21} & B_{22} \\
                                       \end{array}
                                     \right)\\
&=&\left(\begin{array}{cccc|cccc|cccc}
                                        1 & 1 & 1 & 1 & 1 & 1 & 1 & 1 & 1 & 1 & 1 & 1 \\
                                        1 & \omega^6 & \omega^3 & \omega^9 & \omega & \omega^7 & \omega^4 & \omega^{10} & \omega^2 & \omega^8 & \omega^5 & \omega^{11} \\
                                        1 & 1 & \omega^6 & \omega^6 & \omega^2 & \omega^2 & \omega^8 & \omega^8 & \omega^4 & \omega^4 & \omega^{10} & \omega^{10} \\
                                        1 & \omega^6 & \omega^9 & \omega^3 & \omega^3 & \omega^9 & 1 & \omega^6 & \omega^6 & 1 & \omega^3 & \omega^9 \\  \hline
                                        1 & 1 & 1 & 1 & \omega^4 & \omega^4 & \omega^4 & \omega^4 & \omega^8 & \omega^8 & \omega^8 & \omega^8 \\
                                        1 & \omega^6 & \omega^3 & \omega^9 & \omega^5 & \omega^{11} & \omega^8 & \omega^2 & \omega^{10} & \omega^4 & \omega & \omega^7 \\
                                        1 & 1 & \omega^6 & \omega^6 & \omega^6 & \omega^6 & 1 & 1 & 1 & 1 & \omega^6 & \omega^6 \\
                                        1 & \omega^6 & \omega^9 & \omega^3 & \omega^7 & \omega & \omega^4 & \omega^{10} & \omega^2 & \omega^8 & \omega^{11} & \omega^5 \\   \hline
                                        1 & 1 & 1 & 1 & \omega^8 & \omega^8 & \omega^8 & \omega^8 & \omega^4 & \omega^4 & \omega^4 & \omega^4 \\
                                        1 & \omega^6 & \omega^3 & \omega^9 & \omega^9 & \omega^3 & 1 & \omega^6 & \omega^6 & 1 & \omega^9 & \omega^3 \\
                                        1 & 1 & \omega^6 & \omega^6 & \omega^{10} & \omega^{10} & \omega^4 & \omega^4 & \omega^8 & \omega^8 & \omega^2 & \omega^2 \\
                                        1 & \omega^6 & \omega^9 & \omega^3 & \omega^{11} & \omega^5 & \omega^8 & \omega^2 & \omega^{10} & \omega^4 & \omega^7 & \omega \\
                                      \end{array}
                                    \right).
\end{eqnarray*}
Here, we have $B_{hk} = C_{hk}A_h$, for all $h,k=0,1,2$, where
$$
C_{hk} = \omega^{4h\widetilde{\sigma}^{-1}(4k)}  \cdot D_4(\omega^{\widetilde{\sigma}^{-1}(4k)}),
$$
and
$$
A_0(i,j) = \omega^{i\widetilde{\sigma}^{-1}(j)}  \qquad  A_1(i,j) = \omega^{(i+4)\widetilde{\sigma}^{-1}(j)}    \qquad A_2(i,j) = \omega^{(i+8)\widetilde{\sigma}^{-1}(j)}
$$
for $i,j=0,1,2,3$, with $\widetilde{\sigma}^{-1}(0)=0$; $\widetilde{\sigma}^{-1}(1)=6$; $\widetilde{\sigma}^{-1}(2)=3$; $\widetilde{\sigma}^{-1}(3)=9$, since $\widetilde{\sigma} = (0)(1\ 4\ 6)(2\ 8\ 9\ 3)(5\ 10\ 7)(11)$. Observe that, in this special example, we get $A_0 = A_1 = A_2$, what is not true in general.
\end{example}

\section*{Acknowledgments}
We want to express our deepest gratitude to Fabio Scarabotti for enlightening discussions and for his continuous encouragement. %%%%%%%%%%%%%%%%%%%%%%%%%%%%%%%%%%%%%%%%%%%%%%%%%%%%%%%%%%%%%%%%%%%%%%%%%%%%%%%%%%%%


\begin{thebibliography}{99}

\bibitem{aldousdiaconis} D. Aldous, P. Diaconis, Shuffling cards and stopping times,  {\it Amer. Math. Monthly} {\bf 93} (1986), no. 5, 333--348. \texttt{doi:10.2307/2323590}
 
\bibitem{bayerdiaconis} D. Bayer, P. Diaconis, Trailing the dovetail shuffle to its lair, {\it Ann. Appl. Probab.} {\bf 2} (1992), no. 2, 294--313. \texttt{doi:10.1214/aoap/1177005705}

\bibitem{brigham} E.O. Brigham, The Fast Fourier Transform, Prentice-Hall Inc, Englewood Cliffs, New Jersey, 1974.

\bibitem{seminal} J.W. Cooley, J.W. Tuckey, An algorithm for the machine calculation of complex Fourier series, {\it Math. Comp.} {\bf 19} (1965), 297--301. \texttt{doi:10.1090/S0025-5718-1965-0178586-1}

\bibitem{dado} D. D'Angeli, A. Donno, No cut-off phenomenon for the ``insect Markov chain'', {\it Monatsh. Math.} {\bf 156} (2009), no. 3, 201--210. \texttt{doi:10.1007/s00605-008-0014-x}

\bibitem{dado2} D. D'Angeli, A. Donno, Crested products of Markov chains, {\it Ann. Appl. Probab.} {\bf 19} (2009), no. 1, 414--453. \texttt{doi:10.1214/08-AAP546}

\bibitem{dado3} D. D'Angeli, A. Donno, Markov chains on orthogonal block structures, {\it European J. Combin.} {\bf 31} (2010), no. 1, 34--46. \texttt{doi:10.1016/j.ejc.2009.02.003}

\bibitem{davio} M. Davio, Kronecker Products and Shuffle Algebra, {\it IEEE Trans. Comp.} {\bf 30} (1981), no. 2, 116--125. \texttt{doi:10.1109/TC.1981.6312174}

\bibitem{diaconis} P. Diaconis, R.L. Graham, W.M. Kantor, The mathematics of perfect shuffles, {\it Adv. in Appl. Math.} {\bf 4} (1983), no. 2, 175--196. \texttt{doi:10.1016/0196-8858(83)90009-X}

\bibitem{diaconis2} P. Diaconis, Mathematical developments from the analysis of riffle shuffling, {\it Groups, Combinatorics \& Geometry (Durham, 2001)}, 73--97, World Sci. Publ., River Edge, NJ, 2003.

\bibitem{diaconis3} P. Diaconis, The cutoff phenomenon in finite Markov chains,
{\it Proc. Nat. Acad. Sci. U.S.A.} {\bf 93} (1996), no. 4, 1659--1664. \texttt{doi:10.1073/pnas.93.4.1659}

\bibitem{figa} A. Fig\`{a}-Talamanca, An application of Gelfand pairs to a problem of diffusion in compact ultrametric spaces. {\it Topics in Probability and Lie Groups: Boundary Theory}, 51--67, CRM Proc. Lecture Notes, vol. 28, {\it Amer. Math. Soc., Providence, RI}, 2001.

\bibitem{cuttingshuffling} S.W. Golomb, Permutations by cutting and shuffling, {\it SIAM Rev.} {\bf 3} (1961), 293--297.

\bibitem{knuth} D.E. Knuth, The Art of Computer Programming, Vol.2 : Seminumerical Algorithms. {\it Addison-Wesley Publishing Co., Reading, Mass.-London-Don Mills, Ont} 1969 xi+624 pp.

\bibitem{medvedoffmorrison} S. Medvedoff, K. Morrison, Groups of perfect shuffles, {\it Math. Mag.} {\bf 60} (1987), no. 1, 3--14.

\bibitem{rose} D.J. Rose, Matrix Identities of the Fast Fourier Transform, {\it Linear Algebra Appl.} {\bf 29} (1980), 423--443. \texttt{doi:10.1016/0024-3795(80)90253-0}

\bibitem{ronse} C. Ronse, A generalization of the perfect shuffle, {\it Discrete Math.} {\bf 47} (1983), no. 2--3, 293--306. \texttt{doi:10.1016/0012-365X(83)90100-0}

\bibitem{segnali} J.O. Smith III, Mathematics of the Discrete Fourier Transform (DFT): with Audio Applications, Second Edition. W3K Publishing, 2007. 322 pp.

\bibitem{strang} G. Strang, Wavelets, {\it American Scientist} {\bf 82} (1994), no. 3, 250--255.
\end{thebibliography}
\end{document}